\documentclass[12pt,leqno]{article}

\usepackage[shortlabels]{enumitem}

\usepackage[all]{xy}			       
\usepackage{amsmath}	
\usepackage{amsthm}				
\usepackage{amsfonts}				
\usepackage{bussproofs}		    
\usepackage{cancel}				    
\usepackage{geometry}				
\usepackage{graphicx}				
\usepackage{hyperref}				
\usepackage[utf8]{inputenc}		

\usepackage{mathrsfs}				
\usepackage{MnSymbol}			
\usepackage{tikz}				        
\usetikzlibrary{tikzmark}
\usepackage{url}				
\usepackage{xcolor}				
\usepackage{prooftree}
\usepackage{proof}

\usepackage{lineno}
\usepackage{multicol}

\usepackage[numbers]{natbib}

\newcommand{\vv}{\vspace*{-1mm}}

\newcommand{\mfA}{\ensuremath{\mathfrak{A}}}

\newcommand{\cc}{\mathbf{C}}

\newcommand{\dv}{\ensuremath{\dashv\vdash}}

\newcommand{\B}{\ensuremath{\mathcal{B}}}

\newcommand{\sneg}{\ensuremath{{\sim}}}

\newcommand{\qf}{$QLET_{F}^+$}

\newcommand{\qletfp}{$QLET_{F}^+$}

\newcommand{\Lg}{\ensuremath{\mathcal{L}}}

\newcommand{\qcletj}{$Q_{c}LET_{J}$}
\newcommand{\qvletj}{$Q_{v}LET_{J}$}

\newcommand{\noi}{\noindent}

\newcommand{\C}{\ensuremath{\mathcal{C}}}
\newcommand{\D}{\ensuremath{\mathcal{D}}}
\newcommand{\I}{\ensuremath{\mathcal{I}}}

\newcommand{\sig}{\ensuremath{\mathcal{S}}}
\newcommand{\ov}{\ensuremath{\overline}}
\newcommand{\wh}{\ensuremath{\widehat}}
\newcommand{\pii}{\ensuremath{\Pi}}

\label{xLOGICS}{}

\newcommand{\letfp}{\ensuremath{LET_{F}^+}}

\newcommand{\letf}{$LET_{F}$}
\newcommand{\letk}{\ensuremath{LET_{K}}}
\newcommand{\letj}{\ensuremath{LET_{J}}}

\newcommand{\letkp}{\ensuremath{LET_{K}^+}}

\newcommand{\letfm}{\ensuremath{LET_{F}^-}}

\newcommand{\fde}{\ensuremath{FDE}}
\newcommand{\fdeto}{\ensuremath{FDE^{\to}}}

\newcommand{\bo}{\textsf{b}}
\newcommand{\nei}{\textsf{n}}

\newcommand{\nel}{\textit{N4}}

\newcommand{\lets}{\textit{LET}s}

\label{xCOLORS} %

\label{xSYMBOLS}

\newcommand{\cons}{\ensuremath{{\circ}}}
\newcommand{\con}{\ensuremath{{\circ}}}
\newcommand{\incon}{\ensuremath{{\bullet}}}
\newcommand{\inc}{\ensuremath{{\bullet}}}

\newcommand{\defi}{\stackrel{\text{\tiny def}}{=} }

\label{xGENERAL COMMANDS}%

\newcommand{\mh}{\noindent}

\newcommand{\m}{\vspace{1mm}}
\newcommand{\mm}{\vspace{2mm}}
\newcommand{\mmm}{\vspace{3mm}}
\newcommand{\mmmm}{\vspace{4mm}}
\newcommand{\setl}{\setlength\itemsep{-0.2em}}

\newcommand{\bqu}{\begin{quote}} 
\newcommand{\equ}{\end{quote}}
\newcommand{\enr}{\begin{enumerate}[label={(\arabic*)}]}  
\newcommand{\eenr}{\end{enumerate}}

\label{xPORTUGUES}

\theoremstyle{definition}
\newtheorem{theorem}{Theorem}[section]
\newtheorem{lemma}[theorem]{Lemma}
\newtheorem{proposition}[theorem]{Proposition}
\newtheorem{corollary}[theorem]{Corollary}

\newtheorem{definition}[theorem]{Definition}
\newtheorem{remark}[theorem]{Remark}

\usepackage[shortlabels]{enumitem}

\usepackage{geometry}				
\begin{document}

\title{Positive, Negative, and Reliable Information in a First-Order Logic of Evidence and Truth
\thanks{The first and second authors  acknowledge support from the National Council for Scientific and Technological Development (CNPq, Brazil), research grants 307889/2025-4 and 309830/2023-0. The second author also acknowledges support 
from São Paulo Research Foundation (FAPESP, Brazil), thematic project RatioLog, grant 2020/16353-3.}
}

\author{Abilio Rodrigues$^1$ and Marcelo E. Coniglio$^2$ \\
[4mm]
$^1$Department of Philosophy\\
Federal University of Minas Gerais (UFMG) \\ abilio.rodrigues@gmail.com \\
$^2$Institute of Philosophy and the Humanities (IFCH), and\\
Centre for Logic, Epistemology and the History of Science (CLE)\\
University of Campinas (UNICAMP) \\
coniglio@unicamp.br
}
 

\maketitle


\abstract{
\mh In this paper we present the first-order logic \qf, a quantified version of the logic \letfp, introduced in Coniglio and Rodrigues 
({Studia Logica} 112:561–606, 2024).
  \qf\ exhibits several properties that are not always enjoyed by logics equipped with classicality operators -- 
we show that it satisfies the replacement property and admits conjunctive, disjunctive, and prenex normal forms.
Alongside extensions and anti-extensions, as in the previously studied first-order semantics for \lets, we make use here of what we call \con-extensions: given an $n$-ary predicate symbol $P$, the \con-extension of $P$ is the set of $n$-tuples of individuals that satisfy the predicate $\con P$. 
We prove the soundness and completeness of the deductive system of \qf\  with respect to the six-valued first-order semantics.}

\maketitle

  \section{Introduction} \label{sec.intro}

Logics of evidence and truth (\lets) form a family of paracomplete and paraconsistent logics that extend the 
logic of first degree entailment \fde, also known as the  
Belnap-Dunn four-valued logic (see e.g.~\cite{belnap.1977.how,dunn76}) by means of a classicality operator $\con$, governed by inference rules
that recover classical negation for sentences within its scope. These logics were introduced together
with an intuitive interpretation in terms of evidence, which may be either conclusive or
non-conclusive \cite{letj,letf}. Accordingly, a formula $\con A$ is read as saying that the evidence available for $A$,
whether positive or negative, is conclusive.

  Information-based logics are logics designed to process information in the sense of treating a database as a set of premises and drawing conclusions from those premises in a coherent and sensible  manner. 
The idea is that what is transmitted from premises to conclusion is not truth but rather the
availability of information, and that an argument is valid when the information conveyed by the
conclusion is already `contained' in the information conveyed by the premises (see
Belnap~\cite[pp.~35--37]{belnap.1977.how}, and also Wansing et al.~\cite{wans.odin,ShramkoWansing2019NatureEntailment}).

The intuitive interpretation of \fde\ in terms of a computer that deals with possibly inconsistent and
incomplete information, together with its four-valued semantics as proposed by Belnap
\cite{belnap.1977.how}, makes \fde\ the first information-based logic to appear in the literature.
The notion of information underlying the standard interpretation of the four semantic values of \fde\ is
that of information as meaningful data, a notion that allows for false information. This is
the same notion of information that underlies contemporary discussions of informational disorder
(see e.g.~\cite{fallis,fetzer.2004.dis,wardle2017}). A logic suitable for formalizing such
contexts cannot, of course, be explosive, and since information may be lacking with respect to some
topics, such a logic must also be paracomplete (cf.~\cite[p.~46]{belnap.1977.how}).

\lets\ can also be interpreted as information-based logics, and in  this setting 
the intuitive meaning of $\con A$ is that the
information conveyed by $A$, whether positive or negative, is reliable. 
In both cases, when $\con A$ holds, $A$ (as well as any formula composed with $A$ over the sentential connectives) is subjected to classical logic. 
Thus, \lets\ are able to express six scenarios: the four scenarios of \fde, represented here by the semantic values $T_0$ and $F_0$ (respectively only positive and only negative information, without the additional information of reliability), \bo\ and \nei\ (respectively \textit{both}, i.e., inconsistent information, and \textit{none}, i.e., lack of information), together with two additional scenarios corresponding to reliable information, positive and negative, represented by the values $T$ and $F$.


Gurevich \citep{gurevich.1977} and Wansing \citep{wans.93} address the problem of the asymmetry
between positive and negative information in intuitionistic logic -- the proof of an atom $A$ a primitive notion, whereas the refutation $\neg A$ of $A$ is not, for it is typically defined as $A\to\bot$. 
 Both Gurevich and Wansing  emphasize the importance
of a formal system in which these two kinds of information are treated on a par, as independent and primitive
(cf.~\cite[pp.~13-14]{wans.93} and \cite[p.~49]{gurevich.1977}).
However, it is in Wansing \citep{wans.93} that this issue is treated
  within the context of a paraconsistent information-based logic. It should be noted 
that although Belnap did not discuss this point explicitly, the idea of treating positive and negative
information independently within an information-based interpretation is already present in 
the four-valued semantics of \fde.


First-order versions of \lets\  have already been investigated \cite{qletf,rod.ant.lu}, 
but in all these cases   the semantics is non-deterministic with respect to a formula $\con A$,  
a common feature of logics equipped with  recovery operators.   
In this paper we introduce the first-order logic \qf, a quantified version of \letfp, originally introduced
in \citep{con.rod.sl}. 
In \qf\  a formula $\con A$ (where $A$ is atomic) 
is   treated as primitive. 
We extend the notion of literal, and call 
a generalized literal formulas $A$, $\neg A$, and $\con A$ (for $A$ atomic). 
 Thus, in addition to   positive and negative information, we have one more primitive notion, namely, 
 reliable information.
A formula $\con  A$, expressing that $A$ is reliable, is subject to two constraints: it cannot hold when both $A$ and $\neg A$ hold, nor when both do not hold.
Evidently, in both cases, there is no reliable information about $A$. 
Once the semantic values of the sentences of the form $A$, $\neg A$, and $\con A$ for an atom $A$ are given by a valuation, the semantic values of all formulas of the language are deterministically assigned.
The logic \qf\ exhibits several noteworthy features that are not usually found in
logics equipped with recovery operators. 
In particular, in addition to providing a sound and complete six-valued
semantics, we show that \qf\ satisfies the replacement property, admits conjunctive and
disjunctive normal forms, and enjoys a prenex normal form theorem.

Building on the idea of interpreting predicates by means of extensions and anti-extensions, as in earlier
first-order semantics for \lets\ \cite{qletf, rod.ant.lu}, our approach also makes use of what
we call \con-extensions. For an $n$-ary predicate symbol $P$, the \con-extension of $P$ is the set of
$n$-tuples of individuals that satisfy the predicate $\con P$.
In \qf\ reliable information is taken as a primitive notion, together
with positive and negative information. For an atomic formula $A$, $A$ represents positive information
$A$, $\neg A$ represents negative information $A$, and $\con A$ represents reliable information $A$.
These three notions are primitive and mutually independent, except for specific constraints governing
$\con A$.

  This paper is organized as follows. 
In Section~\ref{sec.letfp} we review the sentential logic \letfp. We
present both its bivalued semantics and its six-valued twist-structure semantics, as originally given
in \citep{con.rod.sl}. In Section~\ref{sec.properties.letfp} we investigate the behavior of the
non-classicality operator \incon\ and provide proofs of the replacement property for \letfp, as well as
its conjunctive and disjunctive normal forms.
In Section~\ref{sec.qf}, we introduce a first-order extension of \letfp, which we call \qletfp, together  
with a sound and complete six-valued semantics based on the twist structures of \letfp. In
Section~\ref{sec.prop.qf}, we establish the replacement property and the prenex normal form theorem
for \qf. 

 \section{The logic \letfp} \label{sec.letfp}

%
%
%

 Consider a denumerable set ${\mathcal{V}_0}$  of propositional variables and the set of connectives  
$\mathcal{S}_0 = \{ \land, \lor, \neg,\con \}$. 
The language $\mathcal{L}_0$ of \letfp\ is  the set of formulas  generated by 
${\mathcal{V}_0}$ over $\mathcal{S}_0$. 
Roman capitals $A, B, C,\dots$ will be used as metavariables for the formulas of $\mathcal{L}_0$, 
 while Greek capitals $\Gamma, \Delta, \Sigma, \dots$ will be used as metavariables for sets of formulas. 

\m 
We start by the logic \letfm, introduced in \cite{rod.ant.lu} Section~3 as a  minimal logic of evidence and truth. 
\letfm\  just adds to \fde\ rules that recover classical negation for formulas in the scope of \con.

\begin{definition} \label{def.letfm}  (The logic  \letfm)

\m\mh A natural deduction system 
over $\mathcal{L}_0$ for the logic \letfm\ is given by the following inference rules:

\begin{center}

\mm

 $\infer [{I\land}] {A \land B} {A & B}$ \hspace{1.6cm}
$\infer [{E\land}]  {A} {A \land B}  \hspace{2 mm} \infer []  {B} {A \land B}  $ \hspace{6mm}

\

$\infer [{I\lor}] {A \lor B} {A} \hspace{2 mm} \infer [] {A \lor B} {B}$ \hspace{1cm}
$\infer [{E\lor}] {C} {A \lor B & \infer*{C} {[A]} & \infer*{C} {[B]}}$\hspace{6mm}

\

$\infer [{I\neg {\land}}] {\neg(A \land B)} {\neg A } \hspace{2 mm} \infer [] {\neg(A \land B)} {\neg B}$\hspace{1cm}
$\infer [{E\neg {\land}}] {C} {\neg(A \land B) & \infer*{C} {[\neg A]} & \infer*{C} {[\neg B]}}$\hspace{6mm}

\

$\infer [{I\neg {\lor}}] {\neg(A \lor B)} {\neg A & \neg B}$ \hspace{1.2cm}
$\infer [{E\neg {\lor}}]  {\neg A} {\neg (A \lor B)}  \hspace{2 mm} \infer []  {\neg B} {\neg (A \lor B)}  $ \hspace{1mm}

\ 

$\infer [{DN}] {\neg \neg A} {A}  \hspace{2 mm} \infer [] {A} {\neg \neg A} $

\

$\infer [{EXP^{\circ}}] {B} {\circ A & A & \neg A }$ \hspace{10 mm} 
 $\infer [{PEM^\con}] {A \vee \neg A} {\con A}$\hspace{6mm}

\end{center}

\end{definition}

We now extend the logic \letfm\ with rules that express propagation of classicality, i.e. how the operator \con\ is transmitted 
 from less complex to more complex sentences (cf. \cite{con.rod.sl} Section~3).

 \begin{definition} \label{def.ND.letfp}  

\m\mh A natural deduction system 
over $\mathcal{L}_0$ for the logic \letfp\ is given by adding the following definitions and rules to the logic \letfm:

\mm 

\mh  Definitions: for any formula $A$, let  $A^T \defi \cons A \land A$ and $A^F \defi \cons A \land \neg A$.  

\begin{center}

$\infer [_{I \cons\cons}] {\cons\cons A}{}$ \hspace{12mm} $\infer[_{I \cons\neg}] {\cons \neg A}{\cons A}$  \hspace{12mm} $\infer [_{E \cons\neg}] {\cons A} {\cons \neg A}$

\mmmm

$\infer [_{I\land T}] {(A \land B)^T} {A^T & B^T}$ \hspace{12mm}
$\infer [_{I\land F}] {(A \land B)^F} {A^F} \hspace{2 mm} \infer [] {(A \land B)^F} {B^F}$ \hspace{10mm}

\mmmm

$\infer [_{I\lor T}] {(A \lor B)^T} {A^T} \hspace{2 mm} \infer [] {(A \lor B)^T} {B^T}$ \hspace{10mm}
$\infer [_{I\lor F}] {(A \lor B)^F} {A^F & B^F}$ \hspace{10mm}

\mmmm

%

 $\infer [_{E\land T}]  {A^T} {(A \land B)^T}  \hspace{2 mm} \infer []  {B^T} {(A \land B)^T}  $ \hspace{10mm}
$\infer [_{E\land F}] {C} {(A \land B)^F & \infer*{C} {[A^F]} & \infer*{C} {[B^F]}}$\hspace{6mm}

\mmmm

 $\infer [_{E\lor T}] {C} {(A \lor B)^T & \infer*{C} {[A^T]} & \infer*{C} {[B^T]}}$\hspace{10mm}
$\infer [_{E\lor F}]  {A^F} {(A \lor B)^F}  \hspace{2 mm} \infer []  {B^F} {(A \lor B)^F}  $ \hspace{8mm}

\end{center}

\m\mh A deduction of  $A$ from a set of premises $\Gamma$ in $ND_F$, denoted here by $\Gamma\vdash_{F} A$,
 is defined as usual  for natural deduction systems. 
 
%

\end{definition}

\m 

The rationale of the above rules and their intuitive meaning were presented and discussed in \cite{con.rod.sl}, Section 3.
The idea is to express how both the classical behavior and the operator \con\ are transmitted from less to more complex formulas.
Note that what is recovered is the classical behavior of either $A$ or $\neg A$, once $\con A$ holds.
This does not mean that the formulas $A^T$ and $A^F$ exhibit classical behavior.
Indeed, although $A^T, A^F \vdash B$ holds, $\vdash A^T \lor A^F$ does not, since it may be that neither $A$ nor $\neg A$ (and thus $\con A$) holds -- the corresponding counterexample follows straightforwardly from the semantics below.

\begin{remark} \label{rem:Six}
In~\cite[Proposition~29]{con.rod.sl} it was shown that the logic \letfp\ coincides, up to signature, with  {\bf Six}, the logic preserving degrees of truth associated to involutive Stone algebras, which was introduced in~\cite{can.M.fig.2020}.
\end{remark}

\subsection{Semantics} \label{sec.semantics.letfp}

\subsubsection{Valuation semantics for \letfp} \label{sec.bivalued.letfp}

\begin{definition}  \label{def.bival.letfp} 
A  {\em a bivalued semantics} for \letfp\ is a function $\rho:\mathcal{L}_0 \to \{0,1\}$ satisfying the following properties: 

\enr  
 \item   $\rho(A \land B)=1$ iff $\rho(A)=1$ and $\rho(B)=1$; 
 \item $\rho(A \lor B)=1$ iff $\rho(A)=1$ or $\rho(B)=1$; 
 \item  $\rho(\neg\neg A) = 1$ iff $\rho(A) = 1$; 
 \item $\rho(\neg (A \land B)) = 1$ iff $\rho(\neg A) = 1$ or  $\rho(\neg B) = 1$; 
 \item $\rho(\neg (A \lor B)) = 1$ iff $\rho(\neg A) = 1$ and  $\rho(\neg B) = 1$ ; 
 \item if $\rho(\cons A)=1$, then: $\rho(\neg A)=1$ iff $\rho(A)=0$ .
 \item  $\rho(\cons \cons A)=1$;
 \item $ \rho(\cons \neg A)=\rho(\cons A)$;
 \item  If $\rho(\cons A)=\rho(A)=1$ and $\rho(\cons B)=\rho(B)=1$  then $\rho(\cons(A \land B))=1$;
 \item  If $\rho(\cons A)=\rho(\neg A)=1$  then $\rho(\cons(A \land B))=1$;
 \item If $\rho(\cons B)=\rho(\neg B)=1$  then $\rho(\cons(A \land B))=1$;
 \item If $\rho(\cons(A \land B))=\rho(A)=\rho(B)=1$  then $\rho(\cons A) = \rho(\cons B)=1$;
\item If $\rho(\cons(A \land B))=1$, and either $\rho(\neg A)=1$ or $\rho(\neg B)=1$,  then: \\
  \quad \quad \quad  \quad    either $\rho(\cons A) = \rho(\neg A)=1$ or  $\rho(\cons B)=\rho(\neg B)=1$;
\item If $\rho(\cons A)=\rho(A)=1$  then $\rho(\cons(A \lor B))=1$;
\item If $\rho(\cons B)=\rho(B)=1$  then $\rho(\cons(A \lor B))=1$;
\item If $\rho(\cons A)=\rho(\neg A)=1$ and $\rho(\cons B)=\rho(\neg B)=1$   then $\rho(\cons(A \lor B))=1$;
 \item If $\rho(\cons(A \lor B))=1$, and either $\rho(A)=1$ or $\rho(B)=1$,  then: \\ 
 \quad \quad \quad  \quad  \mbox{either} $\rho(\cons A) = \rho(A)=1$ or  $\rho(\cons B)=\rho(B)=1$;
\item If $\rho(\cons(A \lor B))=1$ and $\rho(A)=\rho(B)=0$  then $\rho(\cons A) = \rho(\cons B)=1$;

\eenr

\m 

\noi The semantical consequence relation $\models_{F}^2$ of \letfp\ with respect to  bivaluations is defined as follows:  
$\Gamma\models_{F}^2 A$ \ if and only if, for every bivaluation $\rho$ for \letfp, if $\rho(B)=1$ for every $B \in \Gamma$, then $\rho(A)=1$. 

\end{definition}
 
\mm  Both the bivalued non-deterministic semantics mentioned above and the six-valued semantics to be presented below were introduced in \cite{con.rod.sl}, where the logics \letkp\ and \letfp\ where introduced. The logic \letfp\ is the $\to$-free fragment of \letkp, and the proofs of soundness and completeness are virtually the same. 
 \letfp\ was investigated in Section~5 of \cite{con.rod.sl}, and we present here both its bivalued and six-valued semantics so that the text remains self-contained.

\subsection{A six-valued semantics for \letfp} \label{sec.six.valued.letfp}

 Let us recall that $\rho$ is a function from sentences of $\mathcal{L}_0$ to $\{ 0,1 \}$.     
 Given a sentence $A$, a bivaluation establishes  the values of $A$, $\neg A$, and $\neg A$. 
Let us also recall  the six scenarios expressed  by \lets\ and how they are represented by bivaluations:

\begin{enumerate} 
\item[] When $\con A$ does not hold, $ \rho(\cons A)=0$: 

\item[] i. $ \rho(A)=1$, $\rho( \neg A)=0$:   only positive information  $A$; \hfill (1,0,0)

\item[] ii. $ \rho(A)=0$, $\rho( \neg A)=1$:   only negative information  $A$;  \hfill (0,1,0)

\item[] iii.   $ \rho(A)=1$, $\rho( \neg A)=1$: contradictory information on $A$;  \hfill  (1,1,0)

\item[] iv.  $ \rho(A)=0$, $\rho( \neg A)=0$: no information at all about $A$; \hfill  (0,0,0)
  \end{enumerate}

\begin{enumerate}

\item[] When $\con A$  holds, $ \rho(\cons A)=1$:

 \item[] v. $ \rho(A)=1$, $\rho( \neg A)=0$: reliable positive information   $A$;  \hfill (1,0,1)

\item[] vi.  $ \rho(A)=0$, $\rho( \neg A)=1$: reliable negative information   $A$.  \hfill  (0,1,1)

 \end{enumerate}

\mh In the right side of each one of the scenarios above there is a triple that  corresponds 
to the values of $A$, $\neg A$, and $\con  A$ in each scenario, that is, $\rho(A),\rho(\neg A), \rho(\con A)$. 
Now, we assign names to each of these scenarios (represented by a triple)  where the name itself is the semantic value expressing the informal interpretation in terms of reliable and unreliable information.

 \begin{multicols}{2}

\enr
\item[] i. $(1,0,0) = T_0$,
\item[] ii. $(0,1,0) = F_0$,
\item[] iii. $(1,1,0) = \bo$,
\item[] iv. $(0,0,0) = \nei$,
\item[] v. $(1,0,1) = T$,
\item[] vi. $(0,1,1) = F$.
\eenr
\end{multicols}

\mh The values $T_0, \bo, \nei, $ and $F_0$ correspond to the four values of \fde, and $T$ and $F$ are 
the new values added to represent the two scenarios added to the four scenarios expressed by \fde\ 
(see \cite[Sect.~1]{con.rod.sl}). 
 The idea is that $T_0$ and $F_0$ are weaker than $T$ and $F$ in the sense that they are not 
 conclusive or reliable, and so a contradiction might be obtained by further investigation. 
  In turn, the semantic values  $T$ and $F$, when assigned to a sentence $A$, indicate that the positive or negative information 
  conveyed by $A$ is reliable (conclusive).

  The bivalued semantics for  \letfp\ is non-deterministic because the semantic value of complex formulas are not 
 functionally determined by the values of its parts: the semantic value of formulas  $\neg p $  and $\con p$ 
 are not functionally determined by the value of $p$. 
  Below, we present a six-valued 
 deterministic semantics for \letfp, with the six semantics values mentioned above,    
obtained by means of a \textit{twist structure}  built upon the bivalued semantics of Definition~\ref{def.bival.letfp} above.

A twist structure for \letfp, defined from the bivalued semantics,     
 is an   algebra  whose domain is formed by triples $(z_1,z_2,z_3)$, called {\it snapshots}, 
 over a Boolean algebra. 
 Each snapshot  represents a three-dimensional  semantic value in which the first coordinate $z_1$ represents the 
 semantic value of a formula $A$ in a given bivaluation $\rho$, and the coordinates $z_2$ and $z_3$ represent  the  
 semantic values of $\neg A$ and $\circ A$, respectively, in this same bivaluation $\rho$.  
 The twist  structure to be presented below for \letfp\  yields a six-valued 
  deterministic matrix in which the set of designated values is formed 
 by the snapshots $z$ such that $z_1=1$, which means that the formula in the position $z_1$ holds, or `is true'. 
   The 2-element Boolean algebra with domain ${\bf 2}=\{0,1\}$ will be denoted by  $\mathcal{B}_2$, and its operations will be denoted by $\sneg$ (Boolean complement),  $\sqcap$ (infimum), and $\sqcup$ (supremum).

\begin{definition} \label{def.twist.letfp} \ (Twist structure for \letfp)

\mh Let ${\bf 2}^3$ be the set of triples $z=(z_1,z_2,z_3)$ over ${\bf 2}$.
The twist structure    $\mathcal{M}= \langle \textsf{B}, \textrm{D}, \mathcal{O}\rangle$  for \letfp\ 
(over the Boolean algebra $\mathcal{B}_2$) 
  is defined as follows:  
 \begin{itemize} \setl  
\item[i.]  The set   $\sf B$ 
is the domain of $\mathcal{M}$, the set of semantic values: 
$$\textsf{B}=\{z \in {\bf 2}^3 \ : \ z_3 \leq z_1 \sqcup z_2 \ \mbox{ and } \ z_1 \sqcap z_2 \sqcap z_3=0  \}, $$

that is, $\textsf{B}  =  \{T, \, T_0, \, \bo, \, \nei, \, F_0, \, F  \}$, where
$$T=(1,0,1)   \ \  T_{0}=(1,0,0)  \ \  \bo=(1,1,0)$$ 
$$ \nei=(0,0,0)  \ \  F_{0}=(0,1,0) \ \  F=(0,1,1)   $$

\item[ii.] The set ${\rm D} \neq \emptyset$, $\textrm{D} \subseteq \sf B$, is the set of designated  semantic values: 
$$\textrm{D}= \{z \in \textsf{B} \ : \ z_1=1 \} =  \{T, \, T_0, \, \bo \},$$
while the set of non-designated semantic values is 
$$\textrm{ND}= \{z \in \textsf{B} \ : \ z_1\neq 1 \} =  \{\nei, \, F_0, \, F  \}$$

\item[iii.] $\mathcal{O}$ is a map that  assigns, to each $n$-ary connective $\#$ of $\sig_0$, a function 
$\tilde{\#} :{\sf B}^n \to  {\sf B}$, 
 defined as follows, for every $z$ and $w$ in $\textsf{B}$: 

\begin{itemize} 

	\item[(1)] $z\,\tilde{\land}\,w = \{u\in \textsf{B} \ : \ u_1=z_1\sqcap w_1  
	\mbox{, } u_2=z_2\sqcup w_2  \mbox{, and } \\ \mbox{ \ \ \ \ \ \ } u_3= (z_1 \sqcap z_3 \sqcap w_1 \sqcap w_3) \sqcup (z_2 \sqcap z_3) \sqcup (w_2 \sqcap w_3)) \}$;

	\item[(2)] $z\,\tilde{\lor}\,w = \{u\in \textsf{B} \ : \ u_1=z_1\sqcup w_1  \mbox{, } u_2=z_2\sqcap w_2  
	\mbox{, and } \\ \mbox{ \ \ \ \ \ \ } u_3= (z_2 \sqcap z_3 \sqcap w_2 \sqcap w_3) \sqcup (z_1 \sqcap z_3) \sqcup (w_1 \sqcap w_3))
	\}$;
\item[(3)] $\tilde{\neg}\,z = \{u\in \textsf{B} \ : \ u_1=z_2  \mbox{, } u_2=z_1  \mbox{ and } u_3=z_3   \}$;
	\item[(4)] $\tilde{\circ}\,z = \{u\in \textsf{B} \ : \ u_1=z_3 \mbox{, } u_2=\sneg z_3 \mbox{ and } u_3=1  \}$.

\end{itemize}

%
%

\end{itemize}

\m\mh The operations (1)-(4)  above can also be presented as follows:
\begin{itemize} 
	\item[(1$'$)] 
$(z_1,z_2,z_3)\,\tilde{\land}\,(w_1,w_2,w_3) =  (z_1\sqcap w_1,z_2\sqcup w_2,(z_1 \sqcap z_3 \sqcap w_1 \sqcap w_3) \sqcup (z_2 \sqcap z_3) \sqcup (w_2 \sqcap w_3))$; 	
	\item[(2$'$)] 
$(z_1,z_2,z_3)\,\tilde{\lor}\,(w_1,w_2,w_3)  =  (z_1\sqcup w_1,z_2\sqcap w_2, (z_2 \sqcap z_3 \sqcap w_2 \sqcap w_3) \sqcup (z_1 \sqcap z_3) \sqcup (w_1 \sqcap w_3))$; 	
	
	\item[(3$'$)] $\tilde{\neg}\,(z_1,z_2,z_3) = (z_2,z_1,z_3 )$;
	\item[(4$'$)] $\tilde{\circ}\,(z_1,z_2,z_3) = (z_3,\sneg z_3 ,1 )$.
\end{itemize}

\end{definition}

 \mh Note that the domain $\textsf{B}$    does not contain the triples $(0,0,1)$ and $(1,1,1)$.  
The restrictions  $z_3 \leq z_1 \sqcup z_2$ and $z_1 \sqcap z_2 \sqcap z_3=0$ in the item i. above 
comply with the  rules 
$PEM^\con$ and $EXP^\con$ and the clause (6) of Definition~\ref{def.bival.letfp}, which 
do not allow bivaluations $\rho$ such that $\rho(A) = \rho(\neg A)=0, \rho(\circ A) =1$, 
  or $\rho(A) = \rho(\neg A)=\rho(\circ A) =   1$. 
 Given the definition of $\sf B$, we can write $v(A) = (v_1, v_2, v_3)$, where 
 $v_i \in \{ 0,1 \}$.

\begin{definition} (Six-valued semantics for \letfp) \label{def.valuation.letfp}

\mh A \textit{valuation} $v$ over the twist structure $\mathcal{M}$ is a function  $v \ :\ \mathcal{L}_0 \to \sf B$ such that:   


\begin{itemize} 

	\item[(v1)] $  v(A \land B) \ = \ v(A) \ \tilde{\land} \ v(B)$;
	\item[(v2)] $  v(A \lor B) \ = \ v(A) \ \tilde{\lor} \ v(B)$;
	\item[(v3)] $  v(\neg A) \ = \ \tilde{\neg} v(A)$;
	\item[(v4)] $  v(\con A) \ = \ \tilde{\con} v(A)$.
	
\end{itemize}

\mh Semantical consequence in \letfp\ with respect to  $\mathcal{M}$, denoted by $\vDash_{F}^6 $, is defined as follows: 
for every set of formulas $\Gamma \cup \{A\} \subseteq \mathcal{L}_0$: 
$ \Gamma\models_{F}^6 A$  if and only if, for every valuation $v$ over $\mathcal{M}$, 
if $v(B) \in \rm D$ for every $B \in \Gamma$, then $v(A) \in \rm D$. 

\mm The six-valued semantics given by the matrix $\mathcal{M}$  for \letfp\ 
can   be described by means of the following tables: 

\mm

\begin{center}
\begin{tabular}{|c||c|c|c|c|c|c|}
\hline
 $\tilde{\wedge}$ & $T$  & $T_0$   & $\bo$ & $\nei$ & $F_0$  & $F$\\[1mm]
 \hline \hline
    $T$    & $T$  & $T_0$ & $\bo$ & $\nei$ & $F_0$ & $F$   \\[1mm] \hline
     $T_0$    & $T_0$  & $T_0$ & $\bo$ & $\nei$ & $F_0$ & $F$  \\[1mm] \hline
     $\bo$    & $\bo$  & $\bo$ & $\bo$ & $F_0$ & $F_0$ & $F$  \\[1mm] \hline
     $\nei$    & $\nei$  & $\nei$ & $F_0$ & $\nei$ & $F_0$ & $F$  \\[1mm] \hline
     $F_0$    & $F_0$  & $F_0$ & $F_0$ & $F_0$ & $F_0$ & $F$  \\[1mm] \hline
     $F$    & $F$  & $F$ & $F$ & $F$ & $F$ & $F$  \\[1mm] \hline
\end{tabular}
\hspace{1cm}
\begin{tabular}{|c||c|} \hline
$\quad$ & $\tilde{\neg}$ \\[1mm]
 \hline \hline
    $T$   & $F$    \\[1mm] \hline
     $T_0$   & $F_0$    \\[1mm] \hline
     $\bo$   &$\bo$    \\[1mm] \hline
     $\nei$   & $\nei$    \\[1mm] \hline
     $F_0$   & $T_0$    \\[1mm] \hline
     $F$   & $T$    \\[1mm] \hline
\end{tabular}

\

\

\begin{tabular}{|c||c|c|c|c|c|c|}
\hline
 $\tilde{\lor}$ & $T$  & $T_0$  & $\bo$ & $\nei$ & $F_0$  & $F$ \\[1mm]
 \hline \hline
    $T$    & $T$  & $T$ & $T$ & $T$ & $T$ & $T$   \\[1mm] \hline
     $T_0$    & $T$  & $T_0$ & $T_0$ & $T_0$ & $T_0$ & $T_0$   \\[1mm] \hline
     $\bo$    & $T$  & $T_0$ & $\bo$ & $T_0$ & $\bo$ & $\bo$   \\[1mm] \hline
     $\nei$    & $T$  & $T_0$ & $T_0$ & $\nei$ & $\nei$ & $\nei$   \\[1mm] \hline
     $F_0$    & $T$  & $T_0$ & $\bo$ & $\nei$ & $F_0$ & $F_0$   \\[1mm] \hline
     $F$    & $T$  & $T_0$ & $\bo$ & $\nei$ & $F_0$ & $F$  \\[1mm] \hline
\end{tabular}
\hspace{1cm}
\begin{tabular}{|c||c|}
\hline
 $\quad$ & $\tilde{\circ}$ \\[1mm]
 \hline \hline
    $T$   & $T$    \\[1mm] \hline
     $T_0$   & $F$     \\[1mm] \hline
     $\bo$   & $F$     \\[1mm] \hline
     $\nei$   & $F$     \\[1mm] \hline
     $F_0$   & $F$     \\[1mm] \hline
     $F$   & $T$    \\[1mm] \hline
\end{tabular}
\end{center}
\end{definition}


 The six-valued semantics of Definition~\ref{def.twist.letfp} is, as expected, 
equivalent to the bivalued semantics of Definition~\ref{def.bival.letfp}. 
This has been proved in \citep{con.rod.sl}, Propositions~16 and~18, for the logic \letkp, 
and the proof is essentially the same for the present case. 
Likewise, the soundness and completeness proof of the deductive system $ND_F$ 
with respect to the bivalued semantics follow the same pattern as the proof for \letk\ 
(see \cite{con.rod.sl}, Theorem~4; recall that \letf\ is the $\to$-free fragment of \letk). 
Therefore:

\begin{theorem}\label{th.letfp.sentential.sound.comp}
$\Gamma \vdash_{F} A$ \ iff \ $\Gamma \models_{F}^6 A$ \ iff \ $\Gamma \models_{F}^2 A$.      
\vv
\begin{proof}
See \cite{con.rod.sl}, Theorems~14, 17, 19, and Section~5.      
\end{proof}
\end{theorem}

\section{Some properties of \letfp} \label{sec.properties.letfp}

A well-known drawback of the logic \fde\ is that it lacks an implication satisfying \emph{modus ponens} and the \emph{deduction theorem}. 
In \letfp, these inferences do not generally hold either.  
Consider implication  defined as
\[
A \to B \;\defi\; \neg A \lor B.
\]
It is straightforward, under the semantics introduced above, to construct counterexamples to the following propositions:
\[   A, \neg A \lor B \vdash B, \]
\[A \vdash B \text{ implies } \vdash \neg A \lor B\]

\mh Nevertheless, this flaw is partially solved in \(\letfp\), since, as expected, for formulas assumed to be classical 
(i.e. reliable), \emph{modus ponens} and the \emph{deduction theorem} hold: 
\[ \con A ,  A, \neg A \lor B \vdash B, \]
\[\con A,A \vdash B \text{ implies } \con A \vdash \neg A \lor B\]
The proofs are straightforward and are left to the reader.

\mm    In the proposition below, we present an alternative deductive system for \(\letfp\) that mirrors the clauses of the bivalued semantics and is based only on the primitive connectives (without defining \(A^T\) and \(A^F\)).

\begin{proposition} \label{prop.ND.con.letfp}   \  

\m\mh The  natural deduction system $ND_F'$ for \letfp\  is obtained by 
 adding to the logic \letfm\ (Definition~\ref{def.letfm}) the following inference rules:

\begin{center}
\begin{small}

$\infer[I\con\land_1] {\cons(A \land B)}{\con A & A & \con B & B}$ \hspace{6mm}
 $\infer[I\con\land_2] {\cons(A \land B)}{\con A & \neg A }$ 
 $\infer[] {\cons(A \land B)}{\con B & \neg B}$ \hspace{6mm}
 
\mmm
$\infer[E\con\land_1]  {\cons A \land \cons B}{\cons(A \land B) & A & B}$ 
\hspace{6mm}
$\infer[E\con\land_2]  {(\cons A \land \neg  A) \lor (\cons B \land \neg  B) }{\cons(A \land B) & \neg A \lor \neg B}$

\mmm

$\infer[I\con\lor_1] {\cons(A \lor B)}{\con A & \neg A & \con B & \neg B }$  \hspace{6mm}
$\infer[I\con\lor_2] {\cons(A \lor B)}{\con A & A }$  
$\infer[] {\cons(A \lor B)}{\con B & B}$ \hspace{6mm}

\mmm

$\infer[E\con\lor_1]  {\cons A \land \cons B}{\cons(A \lor B) & \neg A & \neg B}$ \hspace{6mm}
$\infer[E\con\lor_2]  {(\cons A \land   A) \lor (\cons B \land  B) }{\cons(A \lor B) &  A \lor  B}$

\end{small}
\end{center}


\mm
\mh The systems $ND_F$ of Definition \ref{def.ND.letfp} and $ND_F'$ given above are equivalent.

\begin{proof}
We prove (i) ${I}{\lor}{T}$ and (ii) ${E}{\land}{F}$  in the system $ ND'_F $.

 \begin{small}

\mm\mh (i)

\begin{center}
$\infer[I\land]{(\con A \lor B)\land(A\lor B)=(A\lor B)^T}{\infer[E\land, I\con\lor_1]{\con(A\lor B)}{(\con A \land A)=A^T} 
& \infer[I\lor]{A\lor B}{\infer[E\land]{A}{(\con A \land A)=A^T}}}$

\begin{flushleft}
\mmm\mh (ii)
\end{flushleft}

$ \infer[E\lor,2]{C}{\infer*[\pii]{(\cons A \land \neg  A) \lor (\cons B \land \neg  B)}{\con(A\land B)\land\neg(A\land B)=(A\land B)^F} & \infer*[]{C}{[(\con A\land\neg  A)=A^F]^2} & \infer*[]{C}{[(\con B\land \neg B)=B^F]^2}} $

\end{center}

\mmm\mh Where the derivation $ \pii $ is as follows:

\mmm
\begin{center}

\begin{footnotesize}
$ \infer[E\con\land_2]{(\cons A \land \neg  A) \lor (\cons B \land \neg  B)}
{\infer[E\land]{\con(A\land B)}{\con(A\land B)\land\neg(A\land B)} & \infer[E\neg\land,1]{\neg A\lor\neg B}{\infer[E\land]{\neg(A\land B)}{\con(A\land B)\land\neg(A\land B)} & \infer[I\lor]{\neg A\lor\neg B}{[\neg A]^1} & \infer[I\lor]{\neg A\lor\neg B}{[\neg B]^1}}} $
\end{footnotesize}

\begin{flushleft}
\mm \mh Now, we prove (iii) ${E\con}{\lor_1}$ and (vi) ${I\con}{\land_2}$  in the system $ ND_F $.

\mm 
\mh (iii)
\end{flushleft}
\mm

$ \infer[I\land]{\con A\land \con B}{\infer[E\land]{\con A}{\infer[{E}{\lor}{F}]{(\con A\land\neg A)=A^F} {\infer[I\land]{(\con(A\lor B) \land  \neg(A\lor B))=(A\lor B)^F}
{\con(A\lor B) & \infer[I\neg\lor]{\neg(A\lor B)}{\neg A & \neg B}}} } & 
\infer[E\land]{\con B}{\infer[{E}{\lor}{F}]{(\con B\land \neg B)=B^F} { \infer[I\land]{(\con(A\lor B) \land  \neg(A\lor B))=(A\lor B)^F} 
{\con(A\lor B) & \infer[I\neg\lor]{\neg(A\lor B)}{\neg A & \neg B}} } }} $

\begin{flushleft}

\mm 
\mh (iv)
\end{flushleft}
\mm

$ \infer[E\land]{\con(A\land B)}{\infer[{I\land}{F}]{(A\land B)^F = (\con(A\land B)\land\neg(A\land B)) }
{\infer[I\land]{A^F = (\con A\land  \neg A)}{\con A & \neg A}}} $

\end{center}
\end{small}
\mh The remaining cases are left to the reader.  
\end{proof}
 
\end{proposition}


\begin{proposition} \label{prop.equiv.con} \

\m\mh The following equivalences hold in \letfp:

\begin{enumerate}
\item  ${\cons(A \land B)} \dashv\vdash {(\con A \land \con B \land A \land B ) \lor 
(\con A \land\neg A) \lor (\con B \land\neg B)}$;
\item  ${\cons(A \lor B)} \dashv\vdash {(\con A \land  A) \lor (\con B \land  B) 
\lor (\con A \land \con B \land \neg A \land \neg B )  }$; 

\end{enumerate}

\begin{proof}
Items (1) and (2) can be easily proved in the system $ND'_F$ defined above.  
Indeed,  Item (1) from right to left it follows from rules $I\con\land_1$ and $I\con\land_2$,  
and from left to right  follows from $PEM^\con$, $E\con\land_1$ and $E\con\land_2$.  Item (2) is left to the reader. 
\end{proof}

\end{proposition}

\subsection{The non-classicality operator \inc}  \label{sec.operator.inc}

 A non-classicality operator \incon\ can be defined in  \letfp\    as the negation of classicality. The idea is to express 
 the deductive behavior of non-classical sentences. 
  Such operators were first introduced in the context of \textit{LFI}s as inconsistency operators, see e.g. \cite{carn:marcos:deamo:2000}. In \letfp, \con\ and \incon\ are dual to each other in the sense of \cite{recovery} Section~3 -- roughly speaking, given an inference $A\vdash B$, we swap the positions of premise and conclusion and 
 replace the connectives of $A$ and $B$ with their respective duals, being $\lor$ and $\land$ dual 
 to each other and $\neg$ dual to itself, obtaining $B^d\vdash A^d$ (this will be illustrated below).

\begin{proposition} \ \label{prop.incon.def.rules} 

\m\mh The non-classicality operator \incon\ can be defined in \letfp\ as 
$\incon A \defi \neg\con A$.

\m\mh Derived rules with \incon: 

\center 
\mm\mh \small 

$\infer[Cons] {B}{\con A & \incon A}$ \hspace{9mm} 
$\infer[Comp] {\con A\lor \incon A}{}$ \hspace{9mm}
$\infer[I\incon]{\inc A}{A & \neg A}$ \hspace{9mm}
$\infer[Cases]{A\lor\neg A\lor \inc A}{}$

\mmm \mm

$\infer [I\incon\neg] {\incon \neg A}  {\incon  A}  $ \hspace{12mm}
$\infer [E\incon\neg] {\incon  A}  {\incon \neg A}  $ \hspace{12mm}
$\infer[E\incon\incon]{B}{\incon\incon A}$

\begin{proof} 

Rules $Cons$ and $Comp$ follow from $PEM^\con$, $EXP^\con$, and $I\con\con$.
Below we prove 
(i) $I\incon\neg$, (ii) $Cases$, 
and (iii) $E\incon\incon$. 
The remaining rules are left to the reader. 

\mm
(i) 

$$ \infer [{E\lor,1}] {\incon\neg A} {\infer [{Comp}] {\con\neg A \lor \incon \neg A} {} & \infer [{Cons}] {\incon\neg A} {\infer [E\con\neg] {\cons A} {[\circ \neg A]^1} & \incon A} 
& [\incon\neg A]^1  } $$

\mmm

(ii)

$$\infer[E\lor, 1]{A\lor\neg A \lor\inc A}{\infer[Comp] {\con A\lor\inc A}{} & \infer[I\lor]{A\lor\neg A \lor\inc A}{\infer[PEM^\con]{A\lor\neg A}{[\con A]^1}} & \infer[I\lor]{A\lor\neg A \lor\inc A}{[\inc A]^1}}$$

%

\mmm


(iii)

$$\infer[Cons]{B}{\infer[I\con\neg]{ \con\neg\con A = \con\incon A }{\infer[I\con\con]{\con\con A}{}} & \incon\incon A}$$

\end{proof}

\end{proposition}

\m\mh 
In order to illustrate the duality between \con\ and \incon, as well as between the other connectives, it is left as an exercise to the reader the proof of the  inferences below.  
Note that, in each pair, the inferences on the left and on the right are dual to one another.

\begin{itemize}
  \item[(i)] ${\incon (A \lor B)} \vdash {\incon A \lor \neg A}$ and $\con A \land \neg A \vdash \con (A \land B)$,
  \item[(ii)] ${\incon (A \land B)} \vdash {\incon B \lor B}$ and $\con B \land B \vdash \con (A \lor B)$,
  \item[(iii)] ${\incon (A \land B)} \vdash {\incon A \lor \incon B}$ and $\con A \land \con B \vdash \con (A \lor B)$,
  \item[(iv)] ${\incon A \land \incon B} \vdash {\incon (A \lor B)}$ and $\con (A \land B) \vdash \con A \lor \con B$,
  \item[(v)] {\it Cases} and $EXP^\con$,
  \item[(vi)] {$I\incon$} and $PEM^\con$. 
\end{itemize}

\begin{lemma} \label{lem.val.letfp} \ 

\begin{enumerate}
\item If $\rho(\con(A \ast B))=1, \ast \in \{ \land,\lor \}$, then either $\rho(\con A)=1$ or $\rho(\con B)=1$;
\item $\rho(\con A)=1$ if and only if $\rho(\inc A)=0$. 
\end{enumerate}

\begin{proof} 
 Item 1: for $\ast = \land$ the result follows from clauses (12) and (13) of Definition~\ref{def.bival.letfp}, and 
 analogously, for $\ast = \lor $ it follows from clauses (17) and (18). 
  Item 2:  recall that $\inc A$ is defined as $\neg\con A$ (Proposition~\ref{prop.incon.def.rules}). 
 Now suppose either (i) $\rho(\con A)=1$ and $\rho(\inc A)=1$, or 
 (ii) $\rho(\con A)=0$ and $\rho(\inc  A)=0$. Given that   by clause (7)  $\rho(\con\con A)=1$, 
 in both cases (i) and (ii),  the supposition contradicts clause (6). 
\end{proof}
\end{lemma}

\m 

A  feature of the logic \letfp\ is that, for an atomic sentence $A$, in addition to consider $A$ and $\neg A$ as primitive -- which 
is in line with the idea that  positive and negative  information are primitive and independent of each other --,  
  we consider  $\con A$   as primitive as well, under certain restrictions. 
 This fact provides a sort of `symmetry', or well-behavedness, to the system, and 
   will have a decisive impact on the formulation of the first-order semantics, as we will see  in Section~\ref{sec.qf}. 
We define the notion of \textit{generalized literal}  to also include formulas $\con A$.

\begin{definition} \label{def.literals} \ 

\m\mh Let $A$ be an atomic formula. Then $A$, $\neg A$, and $\con A$ are called \emph{generalized literals}.
\end{definition}

\m 
The measure of formula complexity is defined as usual for \textit{LET}s and \textit{LFI}s. 
Note in the definition below that the complexity of $\con A$ adds 2 to the complexity of $A$, since it is taken to depend on both $A$ and $\neg A$.

\begin{definition} \ \label{def.compl.prop}
\m\mh The complexity $ \cc $ of a formula $A$ of \letfp\  is defined as follows:  

\vv\vv
\begin{itemize}\setl  
\item For $ A $ atomic, $\cc(A) = 1$,
\item ${\cc}(\neg A) = {\cc}(A) + 1  $,  
\item ${\cc}(A\land B) = {\cc}(A) + {\cc}(B) + 1  $,  
\item ${\cc}(A\lor B) = {\cc}(A) + {\cc}(B) + 1  $,  
\item ${\cc}(\con A) = {\cc}(A) + 2  $.  

\end{itemize}

\end{definition}

\subsection{The replacement property} 

In~\cite[Proposition~4.2]{mar.riv.2022} it was shown, by algebraic considerations,  that the logic {\bf Six} enjoys the Replacement property. Since {\bf Six} coincides, up to signature, with \letfp\ (recall Remark~\ref{rem:Six}), the replacement property also holds for $\letfp$. In this section, we will give a direct proof of the replacement property for \letfp, by using proof-theoretic arguments.
This is not a very common result to obtain in logics equipped with recovery connectives.

\begin{theorem} (Replacement property for \letfp) \label{th.repl.sent}

\m\mh 

Let $A$ and $B$ be formulas of \letfp, and let $C(A)$ be a formula containing zero or more occurrences of $A$.  
Let $C(B/A)$ denote the result of replacing one or more occurrences of $A$ by $B$ in $C(A)$.  
If $A \dashv\vdash B$, then $C(A) \dashv\vdash C(B/A)$.

\begin{proof}
The proof is by induction on the complexity of  $C$. 

\begin{enumerate}
\item $C$ is a generalized literal $l$.

Either (i) $A=l$ or (ii) $A$ does not occur in $C$.  In both cases, $C(A) = l $,  

\m If (i), $C(B/A)=B$, $C(A)\dv C(B/A)$ follows from   $l\dv B$.      

\m If (ii), $C(B/A)=C(A)=l$, so $C(A)\dv C(B/A)$.

\mm
\item $C =  D\lor E $

\m $(D\lor E)(A) =  D(A) \lor E(A)$  

(IH) $D(A) \dv D(B/A)$, $E(A) \dv E(B/A)$

$D(A) \lor E(A) {\dv} D(B/A) \lor E(B/A) = (D  \lor E)(B/A)$, by (IH).

\mm
\item $C =  D \land E $.  
Left to the reader.

%
%
%
%
%
%

\mm\item $C = \neg  D $.    

\mm  (i)  $D =  \neg E $. 
(IH) $E(A)  {\dv}   E(B/A)$


 

$\neg\neg E(A)\dv   E(A)$, by DN

$ E(A)  {\dv}   E(B/A) $, by (IH)

$   E(B/A) \dv \neg\neg E(B/A)$, by DN


\mm (ii)  $D  =  E\land F $. 
(IH) $\neg E(A) \dv \neg E(B/A) $, $\neg F(A) \dv \neg F(B/A) $ 

\m
$\neg (E(A) \land F (A)) \dv   \neg  E(A) \lor \neg F(A)$, by De Morgan

$\neg  E(A) \lor \neg F(A){\dv} \neg  E(B/A) \lor \neg F(B/A)$, by (IH) 

   $ \neg  E(B/A) \lor \neg F(B/A) \dv \neg  ( E(B/A) \land  F(B/A))$, by De Morgan


\mm (iii)  $D  =  E\lor  F $. Left to the reader.

\mm (iv)  $D =\con E $. (IH) $\con E(A) \dv \con E(B/A)$.


\m That is, $C=\inc E$. We need the rules \textit{Comp} and \textit{Cons} of 
Proposition~\ref{prop.incon.def.rules}. 

In order to show $\inc E(A) \vdash \inc  E(B/A)$, 
 assume $\inc  E(A)$. From \textit{Comp},  
   $\vdash \con E(B/A)\lor \inc E(B/A)$. 
If  $\inc E(B/A)$ is the case, we have the result. If $\con E(B/A)$ is the case, from (IH) 
we get $\con E(A)$, which together with the assumption $\inc  E(A)$ 
and \textit{Cons}  implies $\inc  E(B/A)$.
 An analogous reasoning proves $\incon E(B/A) \vdash \inc  E(A)$.

\mm  
\item $C = \con D $ 
  
\mm  (i)  $D  = \neg E $. (IH)  $\con  E(A) {\dv} \con  E(B/A)$.
 
$\con\neg E(A) \dv \con E(A) $, by $E\con\neg$ 

$  \con E(A)  \dv \con E(B/A)$, by (IH) 

$\con E(B/A) \dv \con \neg E(B/A)$, by $I\con\neg$

 \mm (ii)  $D  =  E\land F $

\m
(IH) $E(A) \dv E(B/A)$, $F(A) \dv F(B/A)$, $\con E(A) \dv \con E(B/A)$, $\con F(A) \dv \con F(B/A)$. 

\m $\con(E\land F)(A) = \con(E(A) \land F (A))$    

\begin{footnotesize}
    
${\dv} 
(\con E(A) \land \con F(A) \land E(A) \land F(A) ) \lor 
(\con E(A) \land\neg E(A)) \lor 
(\con F(A) \land\neg F(A))$, by Prop.~\ref{prop.equiv.con}.

$ {\dv} {(\con E(B/A) \land \con F(B/A) \land E(B/A) \land F(B/A) ) \lor 
(\con E(B/A) \land\neg E(B/A)) \lor (\con F (B/A)\land\neg F(B/A))} $, by (IH).

${\dv} \con( E(B/A)\land F(B/A) )  = \con(E\land F)(B/A)$, by {Prop.~\ref{prop.equiv.con}}.  
\end{footnotesize}

\mm (iii) $D =  E\lor  F $. Left to the reader.

\mm (iv) $D =\con E $. 
$\con\con E(A) \overset{I\con\con}{\dv} \con\con E(B/A) $

\end{enumerate}
\end{proof}

\end{theorem}

\subsection{Normal forms}

Another property that \letfp\ enjoys is having analogues to the conjunctive normal form and disjunctive normal form theorems, as shown below. We start by some preliminary results.

\begin{proposition} (Bottom and top particles) \label{prop.bottom} \ 

\m\mh Bottom particles can be defined in \letfp\  as  
$\bot \defi \con A\land A \land \neg A$, $\bot \defi \con A \land \incon A$, or $\bot \defi  \incon\incon A$. 

\m\mh Top particles are defined as $\top \defi \con\con A$. 

\begin{proof}
It follows from the rules  $I\con\con $, $ EXP^\con$, $Cons$, and $E\inc\inc$ (Proposition~\ref{prop.incon.def.rules}).
\end{proof}
\end{proposition} 

\m 

\begin{proposition} (Iteration of $\neg$ and $\con$) \ \label{prop.reduced.forms}

\m\mh Let $A$ be a formula of \letfp. 
Any formula $s_1s_2\dots s_n A$ ($ n \geq 0$) 
where $s_i \in \lbrace \neg, \con, \inc \rbrace  $, 
 is equivalent to one of the six formulas: 
 $A$,   \ $\neg A$,  \ $\con A$,  \ $\inc A$,  \ $ \con\con A \ =  \ \top$,  \ $\inc\inc A \ = \ \bot A$. 

\begin{proof} If $ n = 0 $, then  $s_1s_2\dots s_n A = A  $.  
If $ n > 0 $,   then the result is obtained replacement property 
 and  the rules $DN$, $E\con\neg$, and $E\inc\neg$. 
\end{proof}

\end{proposition}

\m
\begin{proposition} \label{prop.equiv.inc} \

\m\mh The following equivalences hold in \letfp:

\begin{enumerate}
\item $\incon (A\land B) \dashv \vdash (\incon A \lor \incon B \lor \neg A \lor\neg B)\land (\incon A \lor A) 
\land (\incon B \lor B) $;
\item $\incon (A\lor B) \dashv \vdash (\incon A \lor \neg A) \land (\incon B \lor \neg B) \land  
(\incon A \lor \incon B \lor  A \lor B) $. 

\end{enumerate}

\begin{proof}
 The result follows from   replacement property (Theorem~\ref{th.repl.sent}) and  
Proposition~\ref{prop.equiv.con}, and by using the De Morgan laws.    
\end{proof}

\end{proposition}

\begin{proposition} (Normal forms) \label{prop.normal.forms}

\m\mh A formula $A$ of \letfp\ is in \textit{disjunctive normal form}  (DNF) 
if and only if: 

\begin{enumerate}
\item $A$ has the form $B_1\lor B_2 \lor \dots \lor B_n$, $n \geq 1$;
\item Each $B_i$ has the form $C_1\land C_2 \land \dots\land  C_n$, $n \geq 1$;
\item Each $C_i$ is either an atom $p$, a negated atom $\neg p$, a formula $\con  p$,  $\incon p$, 
   $\top$, or $\bot$.  
\end{enumerate}

\m\mh A formula $A$ of \letfp\ is in \textit{conjunctive normal form} 
 (CNF) if and only if: 

\begin{enumerate}
\item $A$ has the form $B_1\land B_2 \land \dots \land B_n$, $n \geq 1$;
\item Each $B_i$ has the form $C_1\lor C_2 \lor \dots\lor  C_n$, $n \geq 1$;
\item Each $C_i$ is either an atom $p$, a negated atom $\neg p$, a formula $\con  p$,  $\incon p$, 
 $\top$, or $\bot$.    
\end{enumerate}

\m\mh For every formula $A$ in the language of \letfp, $A$ is logically equivalent to a formula $B$ 
in disjunctive normal form and a formula $B'$ in  conjunctive normal form.

\begin{proof}\ 

\mh Step 1:   Apply replacement and Propositions~\ref{prop.equiv.con} and \ref{prop.equiv.inc} until there is no occurrence of $\neg, \land$, 
or $\lor$ in the scope of \con\ or \incon.  
  
\m\mh    Step 2:   
    Apply de Morgan, double negation, and Proposition~\ref{prop.reduced.forms} pushing negations inward and eliminating 
    iterations of $\neg$, \con,  and \incon,   until 
           in resulting formula   all negations are directly applied to literals. 
 
   \m\mh 
   Step 3:   Apply distributivity until in the resulting formula no disjunction contains a conjunction (in the case of CNF) or 
    no conjunction  contains a disjunction (in the case of DNF). 
    
\end{proof}

\end{proposition}




\section{Adding quantifiers: the logic \qletfp}    \label{sec.qf}

Now we introduce a first-order version of the logic \letfp, dubbed here \qletfp. 
Like the sentential version, the quantified version has independent positive and negative rules.    
The rules for the quantifiers, as well as the semantics, have been obtained by thinking of 
 the quantifiers $ \forall$ and      $ \exists$  as infinite conjunctions 
 and  infinite disjunctions, respectively.   

\subsection{The logic \qletfp} \label{sectQLETF+}
 
The logical vocabulary of \qletfp\ is composed by the operators  
 $  \lor, \land, \neg$, and $\con$, the quantifiers  $ \forall $ and $\exists$,  
   the individual variables
from $\mathcal{V} = \{v_{i}: i \in \mathbb{N}\}$, and parentheses. 
 We  take a first-order language as a pair $\Lg = \langle \C, \mathcal{P}
\rangle$, where  $\C$  is an infinite  set of \textit{individual constants}
and $\mathcal{P}$ is a non-empty set of \textit{predicate letters}. 
Each element $P$ of $\mathcal{P}$ is assumed to have a finite arity. 
We assume here the usual definitions of  the  notions of 
  \emph{term}, \emph{formula}, \emph{bound/free occurrence of a variable},
\emph{sentence} etc.,  but with the proviso that formulas with void quantifiers
are not allowed.  \label{page.void}

Given a first-order language \Lg, we  denote the set of terms
generated by \Lg\ by $Term(\Lg)$, the set of formulas and the set of
sentences generated by \Lg\ are denoted, respectively, by $Form(\Lg)$ and
$Sent(\Lg)$.  $x$, $y$, $z$ will be used as
metavariables ranging over $\mathcal{V}$; $c$, $c_{1}$, $c_{2}$,$\dots$ as
metavariables ranging over $\C$; 
$t$, $t_{1}$, $t_{2}$, $\dots$ as metavariables
ranging over $Term(\Lg)$, 
and $A$, $B$, $C$, $\dots$ as metavariables ranging
over $Form(\Lg)$. 
Given $t, t_{1}, t_{2} \in Term(\Lg)$, we will use the
notation $t(t_{2}/t_{1})$ to denote the result of replacing every occurrence of
$t_{1}$ in $t$ (if any) by $t_{2}$. Similarly, 
$A(c/x)$ will denote the formula
that results by replacing every free occurrence of $x$ in $A$ by $c$.

%

The deductive systems and the semantics of
the logic \qletfp\  will be formulated only in terms of sentences.
This is the reason  we have assumed  that languages  have an infinite number of individual constants,  
otherwise we might be prevented from applying some quantifier rules due to the lack of enough constants.  




\begin{definition}\label{def.ND.qf} (The deductive system $ND_{QF}$ for \qf)

\m\mh Let \Lg\   be a first-order language, $c \in \C$,
	and $A, B, C \in Sent(\Lg)$. The logic \qletfp\ is defined over \Lg\ by
adding  following  rules to the rules of \letfp\ (Definition~\ref{def.ND.letfp}):

\mm

 \m\mh  Definitions: for any formula $A$, let  $A^T \defi \cons A \land A$ and $A^F \defi \cons A \land \neg A$.

	\begin{center}

				\bottomAlignProof
						\AxiomC{$A(c/x)$}
					\RightLabel{$I\forall$}
					\UnaryInfC{$\forall xA$}
				\DisplayProof
			\qquad
				\bottomAlignProof
						\AxiomC{$\forall xA$}
					\RightLabel{$E\forall  $}
					\UnaryInfC{$A(c/x)$}
				\DisplayProof
			\qquad
				\bottomAlignProof
						\AxiomC{$A(c/x)$}
					\RightLabel{$I\exists$}
					\UnaryInfC{$\exists xA$}
				\DisplayProof
			\qquad
				\bottomAlignProof
						\AxiomC{$\exists xA$}
							\AxiomC{$[A(c/x)]$} \noLine
						\UnaryInfC{$\vdots$} \noLine
						\UnaryInfC{$C$}
					\RightLabel{$E\exists$}
					\BinaryInfC{$C$}
				\DisplayProof

			\end{center}

			\m

			\begin{center}

				\bottomAlignProof
						\AxiomC{$\neg A(c/x)$}
					\RightLabel{$I\neg \forall$}
					\UnaryInfC{$\neg \forall xA$}
				\DisplayProof
				\qquad
				\bottomAlignProof
						\AxiomC{$\neg \forall xA$}
							\AxiomC{$[\neg A(c/x)]$} \noLine
						\UnaryInfC{$\vdots$} \noLine
						\UnaryInfC{$C$}
					\RightLabel{$E\neg \forall $}
					\BinaryInfC{$C$}
				\DisplayProof
			\qquad
				\bottomAlignProof
						\AxiomC{$\neg A(c/x)$}
					\RightLabel{$I\neg \exists$}
					\UnaryInfC{$\neg \exists xA$}
				\DisplayProof
				\qquad
				\bottomAlignProof
						\AxiomC{$\neg \exists xA$}
					\RightLabel{$E\neg \exists$}
					\UnaryInfC{$\neg A(c/x)$}
				\DisplayProof

			\end{center}

			\m

			\begin{center}
\bottomAlignProof
						\AxiomC{$\forall x( B \lor Ax)$}
					\RightLabel{$CD$}
					\UnaryInfC{$B \lor \forall x A$}
				\DisplayProof

\mmmm

$\infer[{I \forall T}] {(\forall x A)^T}{(A (c/x))^T}$  \hspace{4mm} $\infer[{E \forall T}] {(A (c/x))^T} {(\forall x A)^T}$ \hspace{4mm} 
$\infer[{I \exists T}] {(\exists x A)^T}{(A (c/x))^T}$  \hspace{4mm} $\infer[{E \exists T}] {C} { (\exists x A)^T & \infer*{C}{[(A (c/x))^T]} }$

\mmmm \mm

$\infer[{I \forall F}] {(\forall x A)^F}{(A (c/x))^F}$  \hspace{3mm} 
$\infer[{E \forall F}] {C} { (\forall x A)^F & \infer*{C}{[(A (c/x))^F]} }$ \hspace{3mm} 
  $\infer[{I \exists F}] {(\exists x A)^F}{(A (c/x))^F}$  \hspace{3mm}  $\infer[{E \exists F}] {(A (c/x))^F} {(\exists x A)^F}$  

\mmmm \mm

$\infer[{CD'}] {B \lor (\forall x A)^T } {\forall x (B \lor A^T)}$   

 			\end{center}

%

\mm
\mh 

\begin{description}
\item Restrictions: 	
\item  In $I\forall  $ and $I\neg \exists $,  
	$c$ must not occur in $A$, nor in any hypothesis on which $A(c/x)$ (respectively $\neg A(c/x)$) depends. 
%
	
	\item
	In $E\exists $ and
	$E\neg \forall $, $c$ must occur neither in $A$ nor in $C$, nor in any
	hypothesis on which $C$ depends, except $A(c/x)$ (respectively $\neg A(c/x)$).

	\item 
In $I\forall T$ and $I \exists F$,  
	$c$ must not occur in $A$, nor in any hypothesis on which $(A(c/x))^T$ (respectively $(A(c/x))^F$) depends. 
%

	\item 
	In $E\exists T$ and $I \forall F$, $c$ must occur neither in $A$ nor in $C$, nor in any
	hypothesis on which $C$ depends, except $(A(c/x))^T$ (respectively $(A(c/x))^F$). 

\item In  $CD$ and  $ CD'$,  $x$ must  not be free in $B$.

\end{description}

\mm

    Given that $\Gamma \cup \{A\} \subseteq \mathrm{Sent}(\Lg)$, we take a \emph{deduction of} $A$ \emph{from} $\Gamma$ \emph{in} \qletfp\ to be defined in the usual way for natural deduction systems (see~\cite[Ch.~2]{troelstra.vandalen.1988}).
    Here, it is enough   to
	say that a  derivation $\pii$ is a finite tree in which each node is either a premise from $\Gamma$ or 
    is obtained from earlier nodes by one of the rules above, and  the sentence at the root (bottommost node) is the \emph{conclusion} of $\pii$.
The notation  $\Gamma \vdash_{Q{F}} A$ means 
	that there exists a derivation in \qf\ from the premises in $\Gamma$
	and whose conclusion is $A$, and the subscript is ommitted  when there is no
	risk of ambiguity.   
\end{definition}

\begin{proposition} \label{prop.ND.con.qf} 

\m\mh An equivalent system, call it $ND'_{QF}$, can be defined  in the primitive language (without defining $A^T$ and $A^F$) 
by adding    to the system $ND_F'$ of Definition~\ref{prop.ND.con.letfp} the rules 
 $\forall I$,  $\forall E$, $\exists I$,  $\exists E$, 
$\neg\forall I$,  $\neg\forall E$, $\neg\exists I$,  $\neg\exists E$, 
 (that is, the  $A^T$- and $A^F$-free rules of Definition~\ref{def.ND.qf} above) 
 plus the following rules   (in ${CD^\con}$, $x$ is not free in $B$):

\mmm
\center 
$\infer[\con\forall I] {\con\forall x B  }{\forall x (B \land\con B )}$
  $\infer {\con\forall x B  }{\exists x (\neg B \land\con B )}$
 \hspace{10mm}
  $\infer[\con\forall E] {\forall x (B \land\con B ) \lor \exists x (\neg B \land\con B )}{\con\forall x B  }$

\mmm

$\infer[\con\exists I] {\con\exists x B  }{\exists x (B \land\con B )}$
  $\infer {\con\exists x B  }{\forall x (\neg B \land\con B )}$
 \hspace{10mm}
  $\infer[\con\exists E] {\exists x (B \land\con B ) \lor \forall x (\neg B \land\con B )}{\con\exists x B  }$

\mmm

\hspace{10mm}
$\infer[_{CD^\con}] {B \lor ( \con(\forall x A) \land \forall x A )}{\forall x (B \lor (\con A\land A))}$ 

\mm
\begin{proof} We prove below the rules (i) $E\con\forall$ and  (ii) $I\con\forall$ 
in the system $ND_{QF}$. 

\mmm
\begin{scriptsize}
(i)
\center \mm
$\infer[{E\lor,2}] {\forall x (Bx\land\con Bx) \lor \exists x (\neg Bx\land\con Bx)}{ {\infer[{PEM^\con}] {\forall x B \lor \neg\forall x B}{\con\forall x B}} & {\infer[I\lor] {\forall x (B\land\con B) \lor \exists x (\neg B\land\con B)}
{\infer[I\forall] {\forall x (B\land\con B)} {\infer[E\forall T] {B(c/x)\land \con B(c/x)=B^T}  {\infer[I\land]{(\con\forall x B \land\forall x B )=(\forall x B)^T}{\con\forall x B & [\forall x B]^2 }}}
}} & {\infer[E\forall F,1] {\forall x (B\land\con B) \lor \exists x (\neg B\land\con B)}{\infer[I\land] {\con\forall x B \land\neg\forall x B = (\forall x B)^F} { \con\forall x B & [\neg\forall x B]^2} & {\infer[I\lor]{\forall x (B\land\con B) \lor \exists x (\neg B\land\con B)}{\infer[I\exists]{\exists x (\neg B\land\con B)}{[(\neg B \land \con B)=B^F]^1}}}}}}$

\mmm
\begin{flushleft}
(ii)
\end{flushleft}

 \mm
\begin{center}
$\infer[E\exists,1] {\con\forall x B}{\exists x (\neg B\land\con B)  &  
\infer[E\land] {\con\forall x B}{\infer[I\forall F] {\con\forall x B \land \neg \forall x B = (\forall x B)^T}{[(\neg B \land\con B)=B^F]^1}} }$  \hspace{6mm} 
$\infer[E\land]{\con\forall x B}
{\infer[I\forall T]{(\forall x B)^T=(\con\forall x B \land \forall x B)}
{\infer[E\forall]{(B\land\con B)=B^T}{\forall x (B\land\con B)}}}$
\end{center}
\end{scriptsize}

\mmmm

Now, we prove  the rules (iii) $E\exists T$ and (iv) $E\exists F$  
in the system $ND'_{QF}$. 

\mmm
\begin{scriptsize}
(iii)
\center


\mm

$\infer[E\exists,2]{C}{\infer[E\lor, 1]{\exists x (B\land\con B)}{\infer[E\con\exists]{\exists x (B\land\con B)\lor \forall x (\neg B\land\con B)}{\infer[E\land]{\con\exists x B}{(\con\exists x B \land  \exists x B) = (\exists x B)^T}} 
& [\exists x (B\land\con B)]^1 & \infer[EXP^\con]{\exists x (B\land\con B)}
{\infer[{E\forall, E\exists }]{\bot}{ (\exists x B)^T = (\con\exists x B \land  \exists x B) & [\forall x (\neg B \land \con B)]^1}}} & \infer*{C}{[(B\land\con B)=B^T]^2}}$

\mmm
\begin{flushleft}
(iv)
\end{flushleft}
\center
\mm

$\infer[E\forall]{(\neg B\land \con B)(c/x)=B(c/x)^T }{\infer[E\lor, 1]{\forall x (\neg B\land \con B) }{ \infer[E\con\exists] {\exists x (  B\land \con B) \lor \forall x (\neg B\land \con B)}{\infer[E\land]{\con\exists x B}{(\con\exists x B\land \neg\exists x B)=(\exists x B)^F}} 
& [\forall x (\neg B\land \con B]^1) 
& \infer[EXP^\con]{\forall x (\neg B\land \con B)}{\infer[{E\neg\exists, E\exists}]{\bot}{[\exists x (B\land\con B)]^1 & \infer[E\land]{\neg\exists x B}{(\con\exists x B\land \neg\exists x B)=(\exists x B)^F} }}}}$

\end{scriptsize}
\mmm
The remaining rules are left to the reader. 

\end{proof}
\end{proposition}

 \subsection{Semantics of \qf}   \label{sec.smtcs.qt}

Below we introduce structures for the logic \qf, 
which are first-order structures for quantified 
logics of evidence and truth with propagation of classicality and constant domains. 
These structures will be defined in terms of (six-valued) valuations, but they can be equivalently defined 
 in terms of bivaluations. They extend the  idea of interpreting predicates in terms of extensions 
 and anti-extensions. In addition to the latter, a $n$-ary predicate  $P$ also has a \con-extension, 
 which is the set of  \textit{n}-tuples  of individuals of the domain that satisfy $\con P$.

 \begin{definition}  (\qf-structures) \label{def.structure.qf}
 
 \m\mh  Consider the  matrix ${\cal M}$ for \letfp\ (Definition~\ref{def.twist.letfp}), 
 and let $\mathcal{L} = \langle \C, \mathcal{P}  \rangle$ be a   first-order language.  
 A  {\it first-order structure}  $\mfA$ over ${\cal M}$ and $\mathcal{L}$ is a pair $\langle \mathcal{D},  \mathcal{I} \rangle$ 
 such that $\mathcal{D}$ is a nonempty set (the \textit{domain} of  $\mfA$) and $\I$ is an \textit{interpretation function} that assigns: 
\begin{enumerate}
\item[1.] For each  constant $c \in \mathcal{C}$, an element $\I(c)$ of $\D$;

\item[2.] For each predicate   $P \in \mathcal{P}$, of arity $n$, an interpretation function  $\I(P): \D^n \to  \sf B$.
\end{enumerate}
\end{definition}

\

Recall from Definition~\ref{def.twist.letfp}, which introduces the twist structure $\mathcal{M}$ for \letfp, that each semantic value  in $\sf B$ is a triple $z=(z_1, z_2, z_3)$, where $z_i \in \{0,1\}$, and $0$ and $1$ can be read as \textit{does not hold} and \textit{holds}, respectively. Using this notation, the following can be easily proven.

\begin{proposition}  \label{prop.alt.struc.qf} \ 

\m\mh 
(I) For each predicate   $P \in \mathcal{P}$, of arity $n$, each interpretation $\I(P)$ induces a triple $\langle  P_+, P_-, P_\con \rangle$, where $P_+ ,  P_-, $ and $ P_\con $ are subsets of $\D^n$, such that:

\begin{itemize} \setl 

\item[(a)] $\langle c_1 ,\dots,c_n  \rangle \in P_{+} $ if and only if  $(I(P)(c_1,\ldots,c_n))_1 =1$, 

\item[(b)] $\langle c_1 ,\dots,c_n  \rangle \in P_{-} $ if and only if   $(I(P)(c_1,\ldots,c_n))_2 =1$, 

\item[(c)] $\langle c_1 ,\dots,c_n  \rangle \in P_{\con} $ if and only if  $(I(P)(c_1,\ldots,c_n))_3 =1$.   

\end{itemize}

 \mm 
\mh (II)   
 Structures for \qf\ can also be equivalently defined by replacing the 
 clause (2) of Definition~\ref{def.structure.qf} with the following clause:

\begin{itemize}
\setl 
\item[2$'$.] 
 For each predicate   $P \in \mathcal{P}$, of arity $n$,  $\I(P)$ is a triple $\langle  P_+, P_-, P_\con \rangle$ 
 such that: 
 
 (i) $P_+ \cup P_- \cup P_\con  \subseteq \D^n $, 
 
(ii) For all $a\in\D$, if $a\in P_\con$,  then  $a\in P_+$ if and only if $a\notin P_-$.  

\end{itemize}

\end{proposition}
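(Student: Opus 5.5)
The plan is to pass back and forth between functions $\D^n\to\textsf{B}$ and triples of subsets of $\D^n$ by reading off the three coordinates, and to check that the two constraints defining $\textsf{B}$ match clause 2$'$.

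For (I), take an interpretation $\I(P):\D^n\to\textsf{B}$ and define $P_+$, $P_-$, $P_\con$ as the sets of tuples whose image has, respectively, first, second or third coordinate equal to $1$. These sets satisfy (a)--(c) by construction and are uniquely determined by them. No further argument is needed here.

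For (II), I will show that the map $\I(P)\mapsto\langle P_+,P_-,P_\con\rangle$ from (I) is a bijection onto the triples satisfying 2$'$. Condition (i) holds trivially. For (ii), take $a\in P_\con$, so $z_3=1$ where $z=\I(P)(a)$.
\begin{itemize}
\item The constraint $z_3\leq z_1\sqcup z_2$ gives $z_1=1$ or $z_2=1$, i.e.\ $a\in P_+$ or $a\in P_-$.
\item The constraint $z_1\sqcap z_2\sqcap z_3=0$ excludes $z_1=z_2=1$, so $a$ is not in both.
\end{itemize}
Together these give $a\in P_+$ iff $a\notin P_-$.

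Conversely, from a triple satisfying 2$'$, define $\I(P)(a)=(\chi_{P_+}(a),\chi_{P_-}(a),\chi_{P_\con}(a))$, where $\chi$ denotes the characteristic function. The same case analysis, run backwards, shows this triple lies in $\textsf{B}$. The only triples in ${\bf 2}^3$ outside $\textsf{B}$ are $(0,0,1)$ and $(1,1,1)$, and both are excluded exactly by (ii).

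The two constructions are mutually inverse, because each is given coordinatewise by characteristic functions. Hence the two definitions of structure determine the same objects. Since the semantics only ever uses $\I(P)$ through its values in $\textsf{B}$, this justifies replacing clause 2 by clause 2$'$.

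There is no real obstacle here. The only point needing care is that (ii) is read as a biconditional, so that it captures both excluded snapshots and not just one of them.
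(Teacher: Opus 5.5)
Your proposal is correct and follows essentially the same route as the paper: the paper reads $P_+$, $P_-$, $P_\circ$ off the three coordinates of $\mathcal{I}(P)$ and observes that clause 2$'$(ii) excludes exactly the snapshots $(0,0,1)$ and $(1,1,1)$ that are missing from $\textsf{B}$. Your version is simply more explicit, spelling out the coordinatewise bijection and checking both defining constraints of $\textsf{B}$ in each direction.
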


The sets    $P_+ $  and  $ P_- $ are the extension and the anti-extension of $P$ and contain the $n$-tuples 
of individuals  that satisfy, respectively, $P$ and $\neg P$. The set  $ P_\con $, called the \con-extension of $P$, 
 contains the $n$-tuples of  individuals that satisfy $\con P$. It is immediate to see that\\

$
 \begin{array}{ll}
    P_+ =   & \big\{\vec c \in \D^n \ : \ I(P)(\vec c) \in \{T,T_0,\bo\}\big\}, \\
     P_- =   & \big\{\vec c \in \D^n \ : \ I(P)(\vec c) \in \{F,F_0,\bo\}\big\}, \\
      P_{\con} =   & \big\{\vec c \in \D^n \ : \ I(P)(\vec c) \in \{T,F\}\big\}. \\
 \end{array}
$

\

Observe that no conditions are imposed ensuring $P_+ \cup P_- = \D^n$ 
or excluding the possibility that $P_+ \cap P_- \not = \emptyset$.
Item 2$'$/(ii) corresponds to the clause (6) 
of Definition~\ref{def.bival.letfp} and does not allow precisely the triples in which 
 $P_+ \cap P_- \cap P_\con$ is empty, and the ones in which $P_\cons$ is not contained in $P_+ \cup P_-$. These scenarios would correspond     to the snapshots $(1,1,1)$ and $(0,0,1)$, which indeed does not belong to $\sf B$.  

\m
Bivaluations for \qf-structures will be defined below (Definition~\ref{def.bival.qf}), 
but it  is clear from this discussion that, being $\rho$ a bivaluation, for each predicate   $P \in \mathcal{P}$, of arity $n$, 
the following holds: 

\begin{itemize} \setl


\item[(a$'$)]  $\rho(P(c_1,\ldots,c_n)) =1$ if and only if $\langle c_1 ,\dots,c_n  \rangle \in P_{+} $, 

\item[(b$'$)]  $\rho(\neg P(c_1,\ldots,c_n)) =1$ if and only if $\langle c_1 ,\dots,c_n  \rangle \in P_{-} $, 

\item[(c$'$)]  $\rho(\con P(c_1,\ldots,c_n)) =1$ if and only if $\langle c_1 ,\dots,c_n  \rangle \in P_{\con} $.  

\end{itemize}

\subsubsection{Valuations induced by structures}

We will adopt here a substitutional interpretation of the quantifiers and, accordingly, we introduce a diagram 
language to specify the semantics.
Given a language $\Lg$ and a structure $\mfA$, the diagram language   is obtained by adding to $\Lg$ a fresh individual constant $\bar{a}$ for each $a$ in the domain of $(\mfA)$.

\begin{definition} (Diagram language) \label{def.diagram}   

 \m\mh Let $\Lg = \langle \C,	\mathcal{P} \rangle$ be first-order language and 
  $\mfA$ be a structure over ${\cal M}$ and $\Lg$. 
	The \emph{diagram language} $\Lg_{\mfA}$ of $\mfA$ is the pair 
	$\langle	\C_{\mfA}, \mathcal{P} \rangle$ such that 
	$\C_{\mfA} = 	\C \cup \{\ov{a}: a \in \D\}$. 
We   use the notation $\wh{\mfA}$ to denote the
	 structure that is just like $\mfA$ except that
	$\ov{a}^{\wh{\mfA}} = a$, for every $a \in \D$.   
\end{definition}
 
Now we   extend the six-valued  semantics of \letfp\ (Definition~\ref{def.twist.letfp}) to first-order.  
From now on, given a  structure $\mfA$, we will  write  $c^\mfA$  and $P^\mfA$ instead of, respectively, 
 $\I(c)$ and $\I(P)$. In addition, if $v: Sen(\Lg_\mfA) \to \sf B$ is a function then we will write $v(A) = (v_1(A),v_2(A),v_3(A))$ for every sentence $A$. That is, $v_i(A)=(v(A))_i$ for $1 \leq i \leq 3$.

\begin{definition} \label{def.6val.qf}  (\qf-valuations induced by structures)

\mm\mh  Let $\mathfrak{A} $ be a  \qf-structure.   
The \textit{valuation} induced by  $\mfA$ 
 over  ${\cal M}$ and $\mathcal{B}_2$ 
is the function $v: Sen(\Lg_\mfA) \to \sf B$ such that:

\enr  
\item $v(P(c_1,\ldots,c_n)) = P^{\mathfrak{A}}({c_1}^{\widehat{\mathfrak{A}}},\ldots,{c_n}^{\widehat{\mathfrak{A}}}) $, 
if $P(c_1,\ldots,c_n)$ is atomic;  

\item  $v(A \land B)  =   \Bigl(v_1(A) \sqcap  v_1(B),  v_2(A) \sqcup  v_2(B),   \\
 \bigl( v_1(A) \sqcap  v_3(A) \sqcap  v_1(B) \sqcap  v_3(B) \bigl) 
\ \sqcup \ \bigl(  v_2(A) \sqcap  v_3(A) \bigl) 
\ \sqcup \ \bigl(  v_2(B) \sqcap  v_3(B) \bigl)  \Bigl)   $;

\item  $v(A \lor B)  =   \Bigl(v_1(A) \sqcup  v_1(B),  v_2(A) \sqcap  v_2(B),   \\
 \bigl( v_2(A) \sqcap  v_3(A) \sqcap  v_2(B) \sqcap  v_3(B) \bigl) 
\ \sqcup \ \bigl(  v_1(A) \sqcap  v_3(A) \bigl) 
 \ \sqcup \ \bigl(  v_1(B) \sqcap  v_3(B) \bigl)  \Bigl)   $;

\item $v(\neg A)  = \Bigl( v_2(A) ,  v_1(A),  v_3(A) \Bigl) $; 
 \item $v(\con A)  = \Bigl(  v_3(A) , \sneg  v_3(A), 1 \Bigl) $;  \label{clause.concon}

\item  $v(\forall x A)  = \Bigl(\bigwedge \{ v_1(A ( \bar{a}/x)) \ : \ a \in {\cal D}\},  
\bigvee \{ v_2(A ( \bar{a}/x)) \ : \ a \in {\cal D} \},  \\
 \bigwedge \{ v_3(A(\bar{a}/x)) \sqcap v_1( A(\bar{a}/x)) \ : \ a \in {\cal D} \}$  
 $\sqcup$
 $\bigvee \{ v_3( A(\bar{a}/x)) \sqcap v_2( A(\bar{a}/x) \ : \ a \in {\cal D}  \} \Bigl)$;   


 \item  $v(\exists x A)  = \Bigl(\bigvee \{ v_1(A ( \bar{a}/x)) \ : \ a \in {\cal D}\},  
\bigwedge \{ v_2(A ( \bar{a}/x)) \ : \ a \in {\cal D} \},$  \\ 
$ \bigwedge \{ v_3(  A(\bar{a}/x)) \sqcap v_2( A(\bar{a}/x)) \ : \ a \in {\cal D} \}$  
 $\sqcup$
 $\bigvee \{ v_3( A(\bar{a}/x)) \sqcap v_1( A(\bar{a}/x) \ : \ a \in {\cal D}  \} \Bigl)$.


\eenr 

\mh A sentence $A$ is	said to \textit{hold} in a given  structure  $\mfA$
	($\mfA \vDash A$) if and only if $v_1(A) = 1$, 
	and a set of sentences	$\Gamma$ is said to hold in $\mfA$ 
	($\mfA \vDash	\Gamma$) if and only if every element of $\Gamma$ holds in $\mfA$.  
	$\Gamma$ is said to have a \textit{model} if it holds in
	some structure.  
	Finally, $A$ is a \emph{semantic consequence} of
	$\Gamma$ ($\Gamma \vDash_{QF}^6 A$) if and only if for every structure $ \mfA$,  
	$\mfA \vDash A$ whenever 	$\mfA \vDash \Gamma$.

\end{definition}


\subsubsection{Bivaluations induced by structures}

As expected, a \qf-structure induces not only a (six-valued) valuation, but also a bivaluation.

	\begin{definition} (\qf-bivaluations induced by structures) \label{def.bival.qf} \

\m\mh  Let $\mathfrak{A} $ be a  \qletfp-structure. 
The \textit{bivaluation}  induced by $\mfA$ is the function $\rho: Sen(\Lg_\mfA) \to \{ 0,1\}$ such that 
$\rho$ satisfies the clauses (1)-(8) of Definition~\ref{def.bival.letfp}
plus the following clauses: 

		\enr 
		\setl
		
		\item[(1$'$)]  $\rho(P(c_1,\dots,c_n)) = 1$ iff $\langle c_1^{\wh{\mfA}},\dots,c_n^{\wh{\mfA}} \rangle \in
			P_{+}^{\mfA}$;    

\item[(2$'$)]  $\rho(\neg P(c_1,\dots,c_n)) = 1$ iff $\langle c_1^{\wh{\mfA}},\dots,c_n^{\wh{\mfA}} \rangle \in
			P_{-}^{\mfA}$;    

		\item[(3$'$)]  $\rho(\con P(c_1,\dots,c_n)) = 1$ iff $\langle c_1^{\wh{\mfA}},\dots,c_n^{\wh{\mfA}} \rangle \in
			P_{\con}^{\mfA}$;    




		  


 


\m
\item[(4$'$)] $\rho((A \land B)^T)=1$ iff $\rho(A^T)=1$ and $\rho(B^T)=1$; 
 \item[(5$'$)] $\rho((A \lor B)^T)=1$ iff $\rho(A^T)=1$ or $\rho(B^T)=1$; 
 \item[(6$'$)] $\rho((A \land B)^F) = 1$ iff $\rho(A^F) = 1$ or  $\rho(B^F) = 1$; 
 \item[(7$'$)] $\rho((A \lor B)^F) = 1$ iff $\rho(A^F) = 1$ and $\rho(B^F) = 1$; 

 \m
		\item[(8$'$)]  $\rho(\forall xB )=1$ iff for 	every $a \in \D$, $\rho(B(\bar{a}/x)) =1$,  

\item[(9$'$)]   $\rho(\exists x B) =1$ iff for some $a \in \D$, $\rho(B(\bar{a}/x)) =1$;

		\item[(10$'$)]   $\rho(\neg \forall x B ) =1$ iff for some $a \in \D$, $\rho(\neg B(\bar{a}/x)) =1$;  
		
\item[(11$'$)]  $\rho(\neg \exists x B) =1$ iff for every $a \in \D$, $\rho(\neg B(\bar{a}/x) ) =1$; 

\m		\item[(12$'$)]   $\rho(\con \forall x B) =1$ iff: \\		
\mbox{\quad\quad} for every $a \in \D$, $\rho(B(\bar{a}/x)) =1$ and $\rho(\con B(\bar{a}/x)) =1$, or \\		
\mbox{\quad\quad} for some $a \in \D$, $\rho(\neg B(\bar{a}/x))=1$ and  $\rho(\con B(\bar{a}/x)) =1$.  

		\item[(13$'$)]  $\rho(\con \exists x B) =1$ iff: \\		
\mbox{\quad\quad} for some $a \in \D$, $\rho( B(\bar{a}/x)) =1$ and  $\rho(\con B(\bar{a}/x))=1 $, or \\
\mbox{\quad\quad} for every $a \in \D$, $\rho(\neg B(\bar{a}/x)) =1$ and $\rho(\con B(\bar{a}/x)) =1$.

\eenr   

\mh A sentence $A$ is	said to \textit{hold} in a given  structure  $\mfA$ with respect to bivaluations 
	($\mfA \vDash_{QF}^2 A$) if and only if $\rho(A) = 1$, 
	and a set of sentences	$\Gamma$ is said to hold in $\mfA$ 
	($\mfA \vDash_{QF}^2	\Gamma$) if and only if every element of $\Gamma$ holds in $\mfA$.  
	Finally,  is a \emph{semantic consequence} of
	$\Gamma$ with respect to bivaluations ($\Gamma \vDash_{QF}^2 A$) if and only if for every structure $ \mfA$,  
	$\mfA \vDash_{QF}^2 A$ whenever $\mfA \vDash_{QF}^2 \Gamma$.
\end{definition}

In Definition~\ref{def.bival.letfp}, clauses~(9)-(18), which correspond to the propagation rules for the operator $\con$, are reproduced verbatim from \cite{con.rod.sl}. These clauses can, however, be simplified into a more intuitive form 
    by means of clauses~(4$'$)-(7$'$) above. 
 The latter  mirror the rules of Definition~\ref{def.ND.letfp} and are similar in form to the clauses for $\lor$ and $\land$ 
 in \fde. 

 \begin{proposition} \  \label{prop.novasclausulas}

\m \mh In the presence of clauses (1)-(8) of Definition~\ref{def.bival.letfp}, it holds that clauses (4$'$)-(7$'$) of Definition~\ref{def.6val.qf} are equivalent to clauses (9)-(18) 
of Definition~\ref{def.bival.letfp}. 
\begin{proof} 
Regarding  conjunction, it is easy to see that clause~(4$'$)/Def.~\ref{def.6val.qf}, from right to left, is equivalent to clause~(9)/Def.~\ref{def.bival.letfp}, and from left to right it is equivalent to clause~(12)/Def. \ref{def.bival.letfp}.  
Similarly, clause~(5$'$)/Def.~\ref{def.6val.qf}, from right to left, is equivalent to clauses (10) and (11)/Def. \ref{def.bival.letfp}, and from left to right it is equivalent to clause~(13)/Def.~\ref{def.bival.letfp}. 
Analogous reasoning applies to disjunction.

\end{proof}
\end{proposition}

\begin{lemma} \  \label{lema.con.forall} 
 
 \mm\mh 
 Let $\mathfrak{A} $ be a \qletfp-structure 
and $v$ the valuation induced by $\mfA$. Given  formulas $ \forall x A$ and $\exists x A$:   
 
\mmm (i)  $v_3(\forall x A)=1 $ if and only if: 
 
\quad  for every $a \in {\cal D}$, $v_3(  A(\bar{a}/x)) = 1 \mbox{ and } v_1( A(\bar{a}/x)) = 1$, or

\quad  for some $a \in {\cal D}$, $v_3(A(\bar{a}/x)) = 1 \mbox{ and } v_2(A(\bar{a}/x)) = 1$.

\mmm

(ii)  $v_3(\exists x A)=1 $ if and only if: 
 
\quad  for some $a \in {\cal D}$, $v_3(A(\bar{a}/x)) = 1 \mbox{ and } v_1(A(\bar{a}/x)) = 1$, or

\quad  for every $a \in {\cal D}$, $v_3(  A(\bar{a}/x)) = 1 \mbox{ and } v_2( A(\bar{a}/x)) = 1$.

 \begin{proof} \
 
\mh  Item (i): from clause 6 of Definition~\ref{def.6val.qf},  
$v_3(\forall x A)=1  $ iff 
either $\bigwedge \{ v_3(A(\bar{a}/x)) \sqcap v_1( A(\bar{a}/x)) \ : \ a \in {\cal D} \}=1 $ 
or $ \bigvee \{ v_3( A(\bar{a}/x)) \sqcap v_2( A(\bar{a}/x) \ : \ a \in {\cal D}  \} = 1$.  
Now, the result follows from the fact that 
 $\bigwedge \{ v_3(A(\bar{a}/x)) \sqcap v_1( A(\bar{a}/x)) \ : \ a \in {\cal D} \} = 1$  
iff for every $a \in {\cal D}$, $v_3(  A(\bar{a}/x)) = 1 \mbox{ and } v_1( A(\bar{a}/x)) = 1$, 
and 
$\bigvee \{ v_3( A(\bar{a}/x)) \sqcap v_2( A(\bar{a}/x) \ : \ a \in {\cal D}  \}=1$ iff for some $a \in {\cal D}$, $v_3(A(\bar{a}/x)) = 1 \mbox{ and } v_2(A(\bar{a}/x)) = 1$.
Item (ii) is left to the reader. 
 \end{proof}
 \end{lemma}

\begin{proposition}  (The bivaluation $\rho_v$ associated to the valuation $v$) \label{prop.rho.induced.v}

\m\mh
Let $\mathfrak{A} $ be a \qletfp-structure over $\Lg$ and ${\cal M}$, and let $v$ be  the  valuation induced by  $\mathfrak{A}$. Then, the
mapping $\rho_v \ : \ Sen(\Lg_\mfA) \to \{0,1\}$ 
given by $\rho_v(A)=v_1(A)$ is the bivaluation  induced by  $\mathfrak{A}$. In addition, it holds that: $\rho_v(A)=1$ \ iff \ $v(A) \in \textrm{D}$, for every sentence $A$.

\begin{proof}
 We have to show that the function $\rho_v$ so defined satisfies the clauses of Definition~\ref{def.bival.qf}. 
  As for the  clauses inherited from Definition~\ref{def.bival.letfp}   (sentential connectives) the proof  is essentially the same as   the proof of Proposition~16 of \cite{con.rod.sl}. 
Given Proposition~\ref{prop.novasclausulas}, the result also holds for clauses~(4$'$)--(7$'$), based as well on the proof in~\cite{con.rod.sl}. It remains to show that $\rho_v$ satisfies clauses~(1$'$)--(3$'$) and (8$'$)--(13$'$) of Definition~\ref{def.bival.qf}.

 \m\mh  Clauses (1$'$), (2$'$), and (3$'$): the result follows from the definition of $\rho_v$ and the Definition~\ref{def.twist.letfp}, given that   
$\rho_v (A) = v_1(A) $,   $\rho_v (\neg A) = v_1(\neg A) = v_2(A)$,  
and $\rho_v (\con A) = v_1(\con A) = v_3(A)$. 

 \m\mh  Clause (8$'$): $v_1(\forall x B) =  \bigwedge \{ v_1(B ( \bar{a}/x)) \ : \ a \in {\cal D} \}$.  
 Therefore, $v_1(\forall x B)=1$ iff for all $a\in D$, $v_1(B ( \bar{a}/x))=1$. 
 
 \m\mh  Clause (9$'$): $v_1(\exists x B) =  \bigvee \{ v_1(B ( \bar{a}/x)) \ : \ a \in {\cal D} \}$.  
 Therefore, $v_1(\forall x B)=1$ iff for some $a\in D$, $v_1(B( \bar{a}/x))=1$. 
 
\m\mh  As for clauses (10$'$) and (11$'$), the proofs are left to the reader.  
For  clauses (12$'$) and (13$'$), the result  follows directly from  Lemma~\ref{lema.con.forall}. 
\end{proof}\end{proposition}

\begin{proposition} 
(The valuation $ v_\rho $ associated to the bivaluation $\rho$) \label{prop.v.induced.rho}
 
\m\mh
Let $\mathfrak{A} $ be a \qletfp-structure over $\Lg$ and ${\cal M}$, and let $\rho$ be the bivaluation induced by $\mathfrak{A}$. then, the mapping  $v_{\rho} \ : \ Sen(\Lg_\mfA) \to \sf B $ given by 
 $ v_\rho(A) = (\rho(A),\rho(\neg A),\rho(\con A) ) $
 is the  valuation induced by  $\mathfrak{A}$. In addition, it holds that:
   $v_\rho(A) \in \textrm{D}$ \ iff \ $\rho_v(A)=1$, for every sentence $A$.

 \begin{proof}  \ 
\m\mh By definition, $v_\rho(A) \in \textrm{D}$ \ iff \ $\rho_v(A)=1$. Now, we have to show that the function $v_\rho$ so defined satisfies the clauses (1)-(7) of Definition~\ref{def.6val.qf}. 
Regarding the clauses (2)-(5) the proof  is essentially the same as   the proof of Proposition~18 of \cite{con.rod.sl}. It remains to be shown that \(v_\rho\) satisfies clauses (1), (6), and (7)  of the Definition~\ref{def.6val.qf}.
 
 \m\mh Clause (1):  for $A$ is atomic, $v_\rho(A) = (\rho(A),\rho(\neg A),\rho(\con A))$, hence   $\rho(A)=1$ iff $v_\rho(A) \in \sf B$. 

\mm\mh Clause (6):   $A = \forall x B$.

\mm\mh By definition, 
$v_\rho(\forall x B) = (\rho(\forall x B),\rho(\neg \forall x B),\rho(\con \forall x B))$. 
By clauses (4$'$), (6$'$), and (8$'$) of Definition~\ref{def.bival.qf} we have: 

\m
\mh (i) $\rho(\forall x B)=1$ iff  for every $a \in \D$, $\rho(B(\bar{a}/x)) =1$;

\m\mh (ii) $\rho(\neg \forall x B ) =1$ iff for some $a \in \D$, $\rho(\neg B(\bar{a}/x)) =1$;  

\m\mh (iii) $\rho(\con \forall x B) =1$ iff: \\		
\mbox{\quad\quad\quad} for every $a \in \D$, $\rho(B(\bar{a}/x)) =1$ and $\rho(\con B(\bar{a}/x)) =1$, or \\		
\mbox{\quad\quad\quad} for some $a \in \D$, $\rho(\neg B(\bar{a}/x))=1$ and  $\rho(\con B(\bar{a}/x)) =1$.  

\mm\mh The conditions (i), (ii), and (iii) above are equivalent to, respectively: 

\m\mh 
(i$'$) $v_1(\forall x B) = \bigwedge \{a \in {\cal D} : v_1(B ( \bar{a}/x))\}$;

\m\mh 
(ii$'$) $v_2(\forall x B) = v_1(\neg \forall x B) = \bigvee \{ a \in {\cal D} : v_2(B ( \bar{a}/x)) \}$;

\m\mh 
(iii$'$) $v_3(\forall x B) = v_1(\con \forall x B) = $ \\ 
$ \mbox{\quad\quad\quad} \bigwedge \{ v_3(B(\bar{a}/x)) \sqcap v_1(B(\bar{a}/x)) \ : \ a \in {\cal D} \}$  
  $\sqcup$
 $\bigvee \{ v_3(B(\bar{a}/x)) \sqcap v_2(B(\bar{a}/x) \ : a \in {\cal D} \}$. 

\mm\mh The proof of clause (7), $A=\exists x B$, is left to the reader. 
\end{proof}  \end{proposition}

From the last two results, it follows that $v = v_{(\rho_v)}$ and $\rho=\rho_{(v_\rho)}$. Moreover:

\begin{proposition} \ 

\mm \mh For every set of formulas \(\Gamma \cup \{A\} \subseteq Sent(\mathcal{L}) \):
$\Gamma\vDash_{QF}^6 A $ if and only if $ \Gamma\vDash_{QF}^2 A$.\begin{proof}
    The result follows directly from Propositions~\ref{prop.rho.induced.v} and \ref{prop.v.induced.rho}. 
\end{proof}    
\end{proposition}

 \subsection{Soundness} (Soundness of \qletfp\ w.r.t.~valuations)

%
%

 \begin{theorem}\label{th.soundness.qf} {(Soundness Theorem)}   

\m\mh Let $\Lg$ be a first-order language and $\Gamma \cup \{A\} \subseteq Sent(\Lg)$. 
If $\Gamma\vdash A$, then $\Gamma \vDash A$. 

	\begin{proof}
Let $\D$ be a derivation of $A$ from $\Gamma$ in \qletfp\
	and let $n$ be the number of nodes in $\D$. 
	If $n = 1$, then  either
	(i) $A \in \Gamma$,  or 
	(ii) $A$ is the result of the rule $I \con$. 
	(i) If $A \in \Gamma$, then $\Gamma \vDash A$, since $\vDash$ is
	reflexive. 
	(ii) If $A$  results from $I \con$, then $A = \cons\cons B$, and from  Definition~\ref{def.6val.qf}, 
	$v_1(\con\con B) = 1$,   therefore	$\Gamma \vDash A$.

	\mm  
	\noi Now suppose that $n > 1$ and that the result holds for
	every derivation $\D'$ with fewer nodes than $\D$. We prove that $\Gamma \vDash A$. 
	Since $n > 1$, $A$ results from an
	application of one of the rules of \qletfp\ other than $I\con$.

		\begin{enumerate}
\setl
 
		\item Let $A = \forall xB$ and suppose that it
		results from an application of rule $\forall I$ to $B(c/x)$. 
		
		Hence, there is a derivation $\D'$ of $B(c/x)$ from $\Gamma$ such that $\D'$ has fewer
		nodes than $\D$.  		By IH, $\Gamma \vDash B(c/x)$.

		Given the restrictions on $\forall I$, $c$
		occurs neither in $B$, nor in any formula of $\Gamma$.
		Let $ \mfA $ be a structure and suppose that 	$\mfA \vDash \Gamma$. 
By IH, $\mfA \vDash B(c/x)$. 

Now, consider a structure $ \mfA'$  that  differs from $ \mfA$ at most in the 
individual assigned to $c$, that is, $c^{\mfA}$. 

Since $c$ does not occur in $\Gamma$, 
for every $C \in \Gamma$, 
 		$v_1^\mfA(C) = 1$ if and only if $v_1^{\mfA'}(C) = 1$. 
 		
 		Hence, $\mfA'\vDash \Gamma$, and by IH, $\mfA' \vDash \B(c/x)$. 

Clearly, for every  structure  $ \mfA'  $  that  differs from $ \mfA $ at most in  $c^{\mfA}$, 
 $\mfA' \vDash \B(c/x)$. 
  Therefore, for every $a \in \D$, $v_1^\mfA(B(\ov{a}/x)) = 1$, and by Definition \ref{def.6val.qf}, $v_1^\mfA(\forall x B) = 1$.  
 
 Hence, $\mfA \vDash A$.
 
\mm
\item Let $A$ be the 	result  of an application of rule $\exists E$ to $\exists x B$. 
		
		Hence, there are derivations $\D'$ of $\exists x B$ from $\Gamma$ 
 and $\D''$ of $A$ from $\Gamma, B(c/x)$		
		such that 	both have fewer nodes than $\D$.  		
		
		By IH, $\Gamma \vDash \exists x B$ and $\Gamma , B(c/x) \vDash A$. 
 
Consider a structure  $  \mfA $ such that $  \mfA  \vDash \Gamma$. 
Thus, $\mfA\vDash\exists x B$, and so there is an individual $a \in \D$ 
such that $\widehat{\mfA } \vDash B(\bar{a}/x)$.

Now consider a structure $\mfA'$ that is just like $\mfA$   except that 
$c^{\mfA'}=a$, and so  $\mfA'$ 
agrees with $\mfA$ on all sentences in which $c$ does not occur. 
Thus, $\mfA'\vDash B(c/x)$   and $\mfA'\vDash \Gamma$, since $c$ does not occur in $\Gamma$. 
Therefore, $\mfA'\vDash A$. 
But because $c$  does not occur in $A$ it  follows that $\mfA \vDash A$.  

	\mm	\item Let $A = (\exists  x B)^F$ and suppose that it
		results from an application of rule $I\exists F$ to $(B(c/x))^F$. 
				Hence, there is a derivation $\D'$ of $(B(c/x))^F$ from $\Gamma$ such that $\D'$ has fewer
		nodes than $\D$. 
		
		
		Given the restrictions on $I\exists F$, $c$ 	occurs neither in $B$,    
		nor in any formula of $\Gamma$. 
		
		
		By (IH), $\Gamma \vDash (B(c/x))^F$. 
		
		Let $\mfA$ be a structure and suppose that
		$\mfA \vDash \Gamma$.

Now, consider a structure  $ \mfA'$  that  differs from $ \mfA$ at most in the 
individual assigned to $c$, that is, $c^{\mfA}$. 

Since $c$ does not occur in $\Gamma$, 
for every $C \in \Gamma$, 
 		$v_1^\mfA(C) = 1$ if and only if $v_1^{\mfA'}(C) = 1$. 
 		
		Therefore, $\mfA' \vDash \Gamma$, and so $\mfA' \vdash  (B(c/x))^F$, 
		 which means that $\mfA' \vdash  \con B(c/x) \land \neg B(c/x) $.

Clearly, for every  structure $ \mfA'$  that  differs from $ \mfA$ at most in  $c^{\mfA}$, 
 $\mfA' \vDash \con B(c/x) \land \neg B(c/x) $. 
  Therefore, for every $a \in \D$, $v_1^\mfA( \con B(\bar{a}/x) \land \neg B(\bar{a}/x) ) = 1$, that is, 
   for every $a \in \D$, $v_3^\mfA(B(\bar{a}/x))=1$ and $ v_2^\mfA( B(\bar{a}/x) ) = 1$. 
  
Therefore, by Lemma \ref{lema.con.forall} and Definition~\ref{def.6val.qf},  $v_3^\mfA(\exists x B)$ and $v_2^\mfA(\exists x B) =1$, 
  that is,  $\mfA \vDash \con\exists x B\land \neg\exists x B$,.  
 Hence, $\mfA \vDash A$.

%
%
%
%

		\mm\item Now  suppose that  $A = (B(c/x))^T$ 
		results from an application of rule $E\forall T$ to $(\forall x B)^T$. 
		
 						Hence, there is a derivation $\D'$ of $(\forall x B)^T$ from $\Gamma$  and 
such that  $\D'$  has fewer
		nodes than $\D$. 

Suppose $\mfA \vDash \Gamma$. By IH, $\mfA \vDash (\forall x B)^T $, which means that 
$\mfA \vDash \con \forall x B \land  \forall x B $. 

 By Definition~\ref{def.6val.qf} and  Lemma \ref{lema.con.forall}, 
 it follows that for every $a \in \D$, $v_1^\mfA(B(\bar{a}/x))=1$ and $v_3^\mfA(B(\bar{a}/x))=1$, 
 in particular,  $v_1^\mfA(B(c/x))=1$ and $v_3^\mfA(B(c/x))=1$,  
 and so  $\mfA\vDash (B(c/x))^T$. 
 Therefore, $\mfA\vDash A$.

\mm\item Let $A = B \lor (\forall x A)^T$ and suppose it results from an application of $CD'$. 
 
 Hence, there is a derivation $\D'$ of $\forall x ( B \lor (A)^T)$ from $\Gamma$, 
 $\D'$ has fewer  nodes than $\D$. 
		
		
		Given the restrictions on $CD'$, $x$ is not free in $B$. 
 		

		Let $\mfA$ be a structure and suppose that
		$\mfA \vDash \Gamma$. 
		 
		By (IH), $\mfA \vDash \forall x (B \lor (A)^T)$, 
		so for every $a \in \D$, $\mfA \vDash  (B \lor (A)^T)(\bar{a}/x)$.  
		But since $x$ is not free in $B$, either   $\mfA \vDash  B $, or   for every $a \in \D$, $\mfA \vDash (A)^T(\bar{a}/x)$. 
 
  If $\mfA \vDash B$, it also follows that $\mfA \vDash  B\lor \forall x (A)^T$.

Suppose $\mfA \nvDash B$. 
	It follows that for every $a\in \D$, $\mfA \vDash (A)^T(\bar{a}/x)$, and so 
$\mfA \vDash  \forall x (A)^T$, hence  $\mfA \vDash  B\lor \forall x (A)^T$.  
Therefore, $\mfA\vDash A$.

	\mm The proof of the remaining cases is left to the reader.
		\end{enumerate}
	\end{proof}
 \end{theorem}

\subsection{Completeness}   \label{sec.compl.qf}


The completeness proof  given below is a Henkin-style argument adapted to the specific features of the logic \qf. It proceeds in two steps.
First, we show that if $\Gamma \nvdash A$, then $\Gamma$ can be extended to an $A$-saturated set $\Delta$, which is  a set maximal with respect to not deriving $A$, such that:  
(a)  $\Delta$ is closed under $\vdash$; (b) $\Delta$ is prime;  and (c) $\Delta$   has \emph{witnesses} for every universal and 
for every existential sentence.
Second, we show that, given a set $\Delta$ satisfying (a)-(c) above, 
there exists a structure $\mfA$ such that, for every sentence $A$, $A \in \Delta$ iff $\mfA \vDash A$.



	\begin{definition} \label{def.henkinset} \ 
	
	\m\mh 
 Let $\Lg$ be a first-order language and a set  
	 $\Delta \subseteq Sent(\Lg)$. 
	 We say that $\Delta$  is a  {\it Henkin set} if the following holds: 
\begin{itemize} \setl 
	\item[] (i) $\Delta \vdash \exists xB$ iff
	$\Delta \vdash B(c/x)$, for some $c \in \C$;   
	
	\item[]  (ii) $\Delta
	\vdash \forall xB$ iff $\Delta \vdash B(c/x)$, for every $c \in
	\C$.
\end{itemize}	
%
%
	
	\mm\mh 
	We say that $\Delta$ is a {\it regular set} if  the following holds: 
 \begin{itemize} \setl 
\item[]	(i) $\Delta$ is {non-trivial}, i.e. for some $A$, $\Delta \nvdash A$; 
	
	\item[] (ii) $\Delta$ is \textit{closed}, i.e.  if
	$\Delta \vdash A$, then $A \in \Delta$; 
	
	\item[] (iii) $\Delta$ is  \textit{disjunctive}, i.e.  if $\Delta \vdash A
	\lor B$, then $\Delta \vdash A$ or $\Delta \vdash B$.  
\end{itemize}
	\end{definition}

	\begin{lemma}\label{lema.semantico} \ 
	
%

	 \m\mh If $\Delta$ is a regular Henkin set, then:

\enr \setl   
		\item  $B \land C \in \Delta$ iff $B \in \Delta$ and $C \in
		\Delta$;

		\item $B \lor C \in \Delta$ iff $B \in \Delta$ or $C \in
		\Delta$;

		\item  $\neg(B \land C) \in \Delta$ iff $\neg B \in \Delta$
		or $\neg C \in \Delta$;

		\item  $\neg(B \lor C) \in \Delta$ iff $\neg B \in \Delta$
		and $\neg C \in \Delta$;

		\item  $\neg \neg B \in \Delta$ iff $B \in \Delta$;

\item $\cons A \in \Delta$  only if $   A \in \Delta$ iff $ \neg A \notin \Delta$;

 \item $\cons\cons A \in \Delta$;
 \item $\cons A \in \Delta$  iff $\cons\neg A \in \Delta$;

		\item  $(B \land C)^T \in \Delta$ iff $B^T \in \Delta$ and $C^T \in
		\Delta$;

		\item  $(B \lor C)^T \in \Delta$ iff $B^T \in \Delta$ or $C^T \in
		\Delta$;
		
		\item  $(B \land C)^F \in \Delta$ iff $B^F \in \Delta$ or $C^F \in
		\Delta$;

		\item  $(B \lor C)^F \in \Delta$ iff $B^F \in \Delta$ and $C^F \in
		\Delta$;

		\item  $\forall xB \in \Delta$ iff for 	every $a \in \D$, $B(\bar{a}/x) \in \Delta$,  

		\item  $\exists xB \in \Delta$ iff for some $a \in \D$, $B(\bar{a}/x) \in \Delta$;

		\item  $\neg \forall xB \in \Delta$ iff for some $a \in \D$, $\neg B(\bar{a}/x) \in \Delta$;  
		
		 \item  $\neg \exists xB \in \Delta$ iff for every $a \in \D$, $\neg B(\bar{a}/x) \in \Delta$; 

		\item  $\con \forall xB \in \Delta$ iff: \\		
\mbox{\quad\quad} for every $a \in \D$, $B(\bar{a}/x) \in \Delta$ and $\con B(\bar{a}/x) \in \Delta$, or \\		
\mbox{\quad\quad} for some $a \in \D$, $\neg B(\bar{a}/x) \in 	\Delta$ and  $\con B(\bar{a}/x) \in \Delta$.  

		\item  $\con \exists xB \in \Delta$ iff: \\		
\mbox{\quad\quad} for some $a \in \D$, $ B(\bar{a}/x) \in 	\Delta$ and  $\con B(\bar{a}/x) \in  \Delta$, or \\
\mbox{\quad\quad} for every $a \in \D$, $\neg B(\bar{a}/x) \in \Delta$ and $\con B(\bar{a}/x) \in \Delta$. 
 
\eenr 	\begin{proof} The proof os this lemma is routine. Items (1) to (16) are  consequences of the assumption
	that $\Delta$ is a regular Henkin set together with the rules of Definition~\ref{def.ND.qf}.   
	Items (17) and (18)  follow straightforward from the alternative rules of 
    Proposition~\ref{prop.ND.con.qf}.    
    \end{proof}
	\end{lemma}

We now proceed with a Lindenbaum construction to show how an Henkin  set $\Delta$ such that $\Delta \nvdash A$ 
can be obtained from a given set $\Gamma$ such that $\Gamma \nvdash A$.



	\begin{lemma} \label{lema.lind}  (Lindenbaum)

	\m\mh Let $\mathcal{L} = \langle \mathcal{C}, \mathcal{P} \rangle$ be a
	first-order language and $\Gamma \cup \{A\} \subseteq Sent(\mathcal{L})$. If $\Gamma \nvdash A$, then there
	is a language $\Lg^{+} = \langle \mathcal{C}^{+}, \mathcal{P} \rangle$ and a regular Henkin set
	$\Delta \subseteq Sent(\Lg^{+})$ such that $\mathcal{C} \subseteq \mathcal{C}^{+}$, $\Gamma
	\subseteq \Delta$, and $\Delta \nvdash A$.
	\end{lemma}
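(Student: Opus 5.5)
I will follow the standard Henkin–Lindenbaum construction for first-order logics without a deduction theorem, adapted to the natural deduction system $ND_{QF}$.

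\emph{Step 1: extend the language.} Let $\mathcal{C}^{+} = \mathcal{C} \cup \{d_i : i \in \mathbb{N}\}$ with infinitely many fresh constants, and let $\Lg^{+} = \langle \mathcal{C}^{+}, \mathcal{P}\rangle$. I first check that $\Gamma \nvdash A$ is preserved in $\Lg^{+}$. Suppose a derivation in $\Lg^{+}$ existed. Its new constants can be renamed to constants of $\mathcal{C}$ that occur neither in the derivation nor in $\Gamma \cup \{A\}$; such constants exist because only finitely many constants occur in a derivation. This gives a derivation in $\Lg$, and the renaming respects the eigenvariable restrictions of the quantifier rules. The argument should be done carefully, but it is routine. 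Since $Sent(\Lg^{+})$ is countable (when $\mathcal{C}$ and $\mathcal{P}$ are countable; otherwise I would use a transfinite enumeration), I enumerate all sentences as $B_0, B_1, \dots$.

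\emph{Step 2: build the chain.} Set $\Delta_0 = \Gamma$. At stage $n$, if $\Delta_n, B_n \vdash A$, let $\Delta_{n+1} = \Delta_n$. Otherwise add $B_n$, together with witnesses, as follows.
\begin{itemize}
\item If $B_n = \exists x C$, also add $C(d/x)$ for a fresh $d$.
\item If $B_n = \neg\forall x C$, also add $\neg C(d/x)$.
\item If $B_n = (\exists x C)^T$ or $(\forall x C)^F$, add the corresponding witness instance $(C(d/x))^T$ or $(C(d/x))^F$.
\end{itemize}
Freshness guarantees $\Delta_{n+1} \nvdash A$. Indeed, a derivation of $A$ from the witnessed set would, by $E\exists$ (respectively $E\neg\forall$, $E\exists T$, $E\forall F$) with $d$ as eigenconstant, yield $\Delta_n, B_n \vdash A$. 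Only finitely many fresh constants are used at each stage, so the supply never runs out. Let $\Delta = \bigcup_n \Delta_n$. By compactness of derivations, $\Delta \nvdash A$, so $\Delta$ is non-trivial.

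\emph{Step 3: verify regularity and the Henkin conditions.}
\begin{itemize}
\item \emph{Closure.} If $\Delta \vdash B$ with $B = B_n$, then $\Delta_n, B_n \nvdash A$, since otherwise $\Delta \vdash A$ by transitivity. So $B_n$ was added.
\item \emph{Maximality.} If $B \notin \Delta$, then $\Delta, B \vdash A$.
\item \emph{Disjunctiveness.} If $B \lor C \in \Delta$ but $B, C \notin \Delta$, then $\Delta, B \vdash A$ and $\Delta, C \vdash A$. By $E\lor$ this gives $\Delta \vdash A$, a contradiction.
\item \emph{Henkin condition for $\exists$.} Left to right follows from the witnesses; right to left follows from $I\exists$.
\end{itemize}

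\emph{Main obstacle: the Henkin condition for $\forall$.} The right-to-left direction, from $\Delta \vdash B(c/x)$ for every $c \in \mathcal{C}^{+}$ to $\Delta \vdash \forall x B$, is the main obstacle, because the logic has no classical negation to turn $\forall$ into $\neg\exists\neg$. My first attempt is as follows. Suppose $\forall x B \notin \Delta$. By maximality, $\Delta, \forall x B \vdash A$. I want an instance $B(d/x)$ with $\Delta, B(d/x) \vdash A$, i.e. $B(d/x) \notin \Delta$.

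To obtain it, I would strengthen the construction. When $\Delta_n, B_n \vdash A$ and $B_n = \forall x C$, I add a counter-witness: pick a fresh $d$ and ensure that $\Delta_{n+1}, C(d/x) \vdash A$ while $\Delta_{n+1} \nvdash A$. This needs the claim that if $\Delta_n, \forall x C \vdash A$ then $\Delta_n, C(d/x) \vdash A$ for fresh $d$. That claim is not generally valid, so I would instead use $A$-disjunctions. Choosing $\Delta_{n+1} = \Delta_n \cup \{C(d/x) \lor A\}$ would fail.

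The cleanest route is the rule $CD$. Since $\Delta \nvdash A$, the set $\Delta$ is closed and prime. Let $\Delta'$ be the finite part used to derive $A$ from $\forall x B$. From the conjunction $D$ of $\Delta'$ together with $CD$, the formula $\forall x(A \lor B)$ yields $A \lor \forall x B$. Applying the universal generalization $I\forall$ on a fresh constant therefore shows that $\Delta' \nvdash A \lor B(d/x)$ for some fresh $d$. Adding $B(d/x) \to$ as a refuting instance is thus consistent: I keep $A \lor B(d/x)$ out of the closure. The other $CD$-dependent cases, such as $\forall$-clauses for $(\cdot)^T$ via $CD'$, are handled analogously. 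I would then verify that $\Delta$ satisfies Lemma~\ref{lema.semantico}'s hypotheses.
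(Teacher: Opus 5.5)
There is a genuine gap, and it sits exactly where you flag it: the universal Henkin condition. In Steps 2 and 3 you test every sentence against the fixed target $A$. With a fixed target, nothing stops the construction from rejecting $\forall xB$ at some stage and then accepting every instance $B(d/x)$, $d\in\mathcal{C}^{+}$, at later stages. Each later stage only checks $\Delta_m, B(d/x)\nvdash A$, and that is compatible with $\Delta_m,\forall xB\vdash A$; this is precisely the invalid inference you yourself point out.

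Your ``cleanest route'' does not close the gap. It is an after-the-fact argument about a finite subset $\Delta'$. Knowing $\Delta'\nvdash A\lor B(d/x)$ for some $d$ fresh for $\Delta'$ tells you nothing about $\Delta$, which mentions every constant of $\mathcal{C}^{+}$ and may well contain $B(d/x)$. ``Keeping $A\lor B(d/x)$ out of the closure'' is the right goal, but no step of your construction enforces it. Also, ``$B(d/x)\to$'' is not a formula of this implication-free language.

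The missing idea, which the paper uses, is a moving target:
\begin{itemize}
\item Carry sentences $A_n$ with $A_0=A$, and test $\Gamma_n,B_n\vdash A_n$ rather than $\Gamma_n,B_n\vdash A$.
\item When $B_n$ is accepted, set $A_{n+1}=A_n$.
\item When $B_n$ is rejected, set $A_{n+1}=A_n\lor B_n$; if $B_n=\forall xC$, set instead $A_{n+1}=A_n\lor\forall xC\lor C(c_j/x)$, with $c_j$ fresh for $\Gamma_n$, $B_n$ and $A_n$.
\end{itemize}
Your $I\forall$/$CD$ computation is exactly what preserves the invariant $\Gamma_{n+1}\nvdash A_{n+1}$. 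From $\Gamma_n\vdash A_n\lor\forall xC\lor C(c_j/x)$, $I\forall$ and $CD$ give $\Gamma_n\vdash A_n\lor\forall xC$. Then $E\lor$ together with $\Gamma_n,\forall xC\vdash A_n$ gives $\Gamma_n\vdash A_n$, a contradiction.

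Every later target is a disjunctive weakening of an earlier one, so compactness of derivations yields $\Delta\nvdash A_n$ for all $n$. In particular $\Delta\nvdash C(c_j/x)$, which is the universal Henkin condition.

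Your closure, maximality and disjunctiveness arguments then have to be redone relative to the $A_n$. Maximality now reads ``$B_n\notin\Delta$ implies $\Delta,B_n\vdash A_n$''. The arguments go through because $A_m\vdash A_n$ for $m\le n$.

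Two smaller points. The extra witnesses for $\neg\forall$, $(\exists xC)^T$ and $(\forall xC)^F$ are harmless but not needed for a regular Henkin set. In Step 1, the renaming constants need only avoid those occurring in the derivation; avoiding all of $\Gamma$ may be impossible if $\Gamma$ uses every constant of $\mathcal{C}$.
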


	\begin{proof} 
	Let $\mathcal{C}^{+} = \mathcal{C} \cup \{c_{i}: i \in \mathbb{N}\}$,
that is, $\mathcal{C}^+$ is $\mathcal{C}$ plus a denumerable number of new constants. 	
 Let $B_{0},B_{1},B_{2},\dots$ be a list of the sentences in $Sent(\Lg^{+})$, 
 and $c_{0},c_{1},c_{2},\dots$ a list of the new constants, i.e., the constants in 
 $ \mathcal{C}^+ \backslash \mathcal{C} $.   
  Now, consider the sequences $\Gamma_{n}$ and 
	$A_{n}$,  ${n \in \mathbb{N}}$, defined as follows:

\begin{itemize}
\item  $\Gamma_{0} = \Gamma$ and $A_{0} = A$;

\item $\Gamma_{n+1} = $
 \begin{itemize}
 \item[i.] $\Gamma_n$ if $\Gamma_{n}, B_{n} \vdash A_{n}$;
  \item[ii.] $\Gamma_n \cup \{B_n \}$ if $\Gamma_{n}, B_{n} \nvdash A_{n}$ and $B_n \neq \exists x C$;
 \item[iii.] $\Gamma_{n} \cup \{B_{n}, C(c_{j}/x) \}$ if 
 $\Gamma_{n}, B_{n} \nvdash A_{n}$ and  $B_{n} = \exists xC$; 	
 
 \end{itemize}

\item $A_{n+1} = $
\begin{itemize}
 \item[i.] $A$ if $\Gamma_{n}, B_{n} \nvdash A_{n}$;
  \item[ii.] $A\lor B_n$ if $\Gamma_{n}, B_{n} \vdash A_{n}$ and $B_n \neq \forall x C$;
 \item[iii.] $A\lor B_n \lor C(c_{j}/x)$   if 
 $\Gamma_{n}, B_{n} \vdash A_{n}$ and  $B_{n} = \forall xC$; 	
 \end{itemize}
 \item[] where $c_j$ is the first constant in the list of the new constants that does not occur in 
 $\Gamma_n$, in $B_i$, nor in $A_i$, $0 \leq i \leq n$. 
\end{itemize}

\m 


	\noi Let $\Delta = \bigcup \Gamma_{n}$, $n \in \mathbb{N}$. Clearly, $\Gamma \subseteq \Delta$. 
	We prove that $\Delta$ is a regular Henkin set such that $\Delta \nvdash A$. 

		\begin{itemize}

		\item[1.] For every $n \in \mathbb{N}$, $\Gamma_{n} \nvdash A_{n}$. 
		
\m		 The proof is by
		induction on $n$.  By the initial hypothesis, $\Gamma_{0} \nvdash A_{0}$. 
		
		Suppose that $\Gamma_{n} \nvdash A_{n}$ (IH). We show that $\Gamma_{n+1} \nvdash A_{n+1}$.  
		
		There are two cases: either (i) $\Gamma_{n}, B_{n}
		\vdash A_{n}$ or (ii) $\Gamma_{n}, B_{n} \nvdash A_{n}$.

\m 

(i) $\Gamma_{n}, B_{n} \vdash A_{n}$, so  $\Gamma_{n+1} = \Gamma_{n}$.  
Suppose $\Gamma_{n+1} \vdash A_{n+1} $, that is  $\Gamma_{n} \vdash A_{n+1} $. 

We have two cases: either (a)  $A_{n+1} = A_n \lor B_n$ 
		if $B_n \neq \forall x C$, 
		or (b) 		$A_{n+1}= A_n \lor \forall x C \lor C(c_j/x)$ if $B_n = \forall x C$.

\m (i.a) $\Gamma_n\vdash A_n \lor B_n$.
 The latter, together with 
	$\Gamma_{n}, B_{n} \vdash A_{n}$ (the initial hypothesis), implies $\Gamma_{n} \vdash A_{n}$, 
	which contradicts (IH). 
	
\m (i.b)  $\Gamma_n \vdash  A_n \lor \forall x C \lor C(c_j/x)$. Since $c_j$ is a new constant that 
does not occur in $A_n$, by applying $\lor E$ and  $CD$, we obtain 
		$\Gamma_n \vdash  A_n \lor \forall x C \lor \forall x C $, that is, 
		$\Gamma_n\vdash A_n \lor B_n$. The latter, with the initial hypothesis, 
	implies $\Gamma_{n} \vdash A_{n}$, 
	which contradicts (IH).

\m Therefore, $\Gamma_{n+1} \nvdash A_{n+1}$

	\mm

\m (ii) $\Gamma_{n}, B_{n} \nvdash A_{n}$, so $A_{n+1} = A_n$.   
Suppose $\Gamma_{n+1} \vdash A_{n+1} $, that is  $\Gamma_{n+1} \vdash A_{n}$. 

\m 
We have two cases: either (a)  $\Gamma_{n+1} = \Gamma_n \cup \{B_n\}$ 
		if $B_n \neq \exists x C$, 
		or (b) 	$\Gamma_{n+1} = \Gamma_n \cup \{ \exists x C, C(c_j/x) \}$	
		if 	$B_n = \exists x C$.

\m (ii.a) $\Gamma_n, B_n \vdash A_n$ contradicts the initial hypothesis (ii).

\m 
(ii.b) $\Gamma_{n+1} = \Gamma_n \cup \{ \exists x C, C(c_j/x) \}$. 
Since  $c_j$ does not occur in $\Gamma_n$, $\exists x C$, nor in $A_n$, by $\exists E$, 
 it follows that $\Gamma_n,\exists x C \vdash A_n$, that is,  $\Gamma_n,B_n \vdash A_n$,  
 which contradicts the initial hypotheses (ii).

\m Therefore, $\Gamma_{n+1} \nvdash A_{n+1}$

		\item[2.] For every $n \in \mathbb{N}$, $\Delta \nvdash A_{n}$ (in particular, $\Delta
		\nvdash A$).

		\m Suppose that $\Delta \vdash A_{n}$, for some $n \in \mathbb{N}$. 
 Hence, there is a derivation of $A_n$ from a finite set of undischarged hypotheses $\Delta_0$ and  
 some $ \Gamma_m$ such that $\Gamma_m \subseteq \Delta_0$ and $\Gamma_m\vdash A_n$
 	
		If $m \leq n$, then $\Gamma_{m} \subseteq \Gamma_{n}$, and so $\Gamma_{n}	\vdash A_{n}$. 
		
		If $m > n$, for some sentence $B \in   Sent(\Lg^{+})$,  $A_{m} = A_{n} \vee B_i$,  
		therefore $\Gamma_{m} \vdash A_{m}$. 
		Both cases contradict item (1) above.

		\item[3.] If $\Delta \vdash C$, then $C \in \Delta$. 
		
		Suppose that $\Delta \vdash C$ and that $C \notin \Delta$. 
Since $C \notin \Delta$,  $C$ is some $B_n$ such that $\Gamma_{n}, B_{n} \vdash A_{n}$. 
It follows that  $\Delta, B_{n} \vdash A_{n}$ and $\Delta \vdash B_n$, 
therefore $\Delta \vdash A_{n}$, which contradicts (2)
		above. 

		\item[4.] If $\Delta \vdash C \vee D$,  then $\Delta \vdash C$ or $\Delta \vdash D$:
		Suppose that $\Delta \vdash C \vee D$ and that $\Delta \nvdash C$ and $\Delta \nvdash D$.
For some $m$ and $n$,  $C=B_m$ and $D=B_n$, and since both are not in $\Delta$,   
  $\Gamma_{m}, B_{m} \vdash A_{m}$ and $\Gamma_{n}, B_{n} \vdash	A_{n}$. 
  Now, given that $\Delta \vdash B_m \vee B_n$, either $\Delta \vdash A_m$ or 
  $\Delta \vdash A_n$, and  both cases contradict item (2) above. 
  Therefore, $\Delta \nvdash C$ and $\Delta \nvdash D$.

		\item[5.] $\Delta \vdash \forall xC$ if and only if
		$\Delta \vdash C(c/x)$, for every $c \in \mathcal{C}^{+}$.

		 We prove only that if for every $c \in \mathcal{C}^{+}$, $\Delta \vdash C(c/x)$,  
		 then $\Delta \vdash \forall x C$, 		
		 since the other direction is an immediate consequence of rule $\forall E$.

		 We prove the contrapositive: if $\Delta \nvdash \forall x C$, then 		 
		 for some $c \in \mathcal{C}^{+}$, $\Delta \nvdash C(c/x)$.

		 Suppose that $\Delta \nvdash B_{n}$ and $B_{n} = \forall xC $.    
			By the definition of the  sequence $\Gamma_{n},   n \in \mathbb{N}$, 
			it   follows that $\Gamma_{n}, B_{n} \vdash A_{n}$ and that $A_{n+1} =
		A_{n} \vee B_{n} \vee C(c_{j}/x)$.

		Suppose that $\Delta \vdash C(c_{j}/x)$.
		Thus, $\Delta \vdash A_{n+1}$, which contradicts (2) above. Hence, $\Delta \nvdash
		C(c/x)$ for at least one $c \in \mathcal{C}^{+}$.

		\item[6.] $\Delta \vdash \exists xC$ if only if $\Delta
		\vdash C(c/x)$, for some $c \in \mathcal{C}^{+}$.

	 We prove only that if $\Delta \vdash \exists xC$, 
	 then for some $c \in \mathcal{C}^{+}$, $\Delta \vdash C(c/x)$, 
  since the other direction is an immediate consequence of rule $\exists I$.

Suppose $\Delta \vdash \exists xC$. So for some step $n$ in the construction of the sequence $\Gamma_n$, 
$B_n=\exists x C$ and $\Gamma_n, \exists x C \nvdash A_n$. 
Therefore, $\Gamma_{n+1}= \Gamma_n \cup \{ \exists x C, Cc_j\}$, where $j$ is a fresh constant. 
Since $\Gamma_{n+1}  \subseteq \Delta$, $C_j \in \Delta$, and so $\Delta\vdash C_j$. 
Hence, for some $c \in \mathcal{C}^{+}$, $\Delta \vdash C(c/x)$, 
		\end{itemize}
	\end{proof}

Lemma \ref{lema.canonical} below 
 shows how to obtain a model from a regular Henkin set. As usual, the model is
 defined in terms of a \qf-structure  construed over the very symbols of the language and 
 the derivability relation. 
 As is well-known,  it is precisely this link between syntax and semantics, obtained by means 
 of a Henkin construction, that allows to prove completeness.

	\begin{lemma} \label{lema.canonical} (Canonical model)

\m\mh  	
 Let $\Lg = \langle \mathcal{C}, \mathcal{P} \rangle$ be a first-order language and  
 $\Delta\cup \{A \} \subseteq Sent(\Lg)$. 
 If $\Delta$ is a regular Henkin set such that $\Delta\nvdash A$,  
 then $\Delta$ induces a \textit{canonical structure} $\mfA_\Delta$ such that   $\mfA_\Delta\models B $ iff $B\in\Delta$, for every sentence $B$.

\begin{proof}

Define $\mfA_\Delta = \langle \mathcal{D}, \mathcal{I} \rangle$ as follows: 

\begin{enumerate}
\item $\D = \C$

\item For each constant $c $,  $\I(c)= c$

\item For each  predicate $P$ of arity $n$,  $I(P):\C^n \to \sf B$ is given by

\[I(P)(\vec c)=
                        \left\{\begin{array}{cl}
                                T & \mbox{if $P(\vec c) \in \Delta$, $\neg P(\vec c) \notin \Delta$, $\cons P(\vec c) \in \Delta$}\\[1mm]
                                 T_0 & \mbox{if $P(\vec c) \in \Delta$, $\neg P(\vec c) \notin \Delta$, $\cons P(\vec c) \notin \Delta$}\\[1mm]
                                 \bo & \mbox{if $P(\vec c) \in \Delta$, $\neg P(\vec c) \in \Delta$, $\cons P(\vec c) \notin \Delta$}\\[1mm]
                                 \nei & \mbox{if $P(\vec c) \notin \Delta$, $\neg P(\vec c) \notin \Delta$, $\cons P(\vec c) \notin \Delta$}\\[1mm]
                                 F_0 & \mbox{if $P(\vec c) \notin \Delta$, $\neg P(\vec c) \in \Delta$, $\cons P(\vec c) \notin \Delta$}\\[1mm]
                                 F & \mbox{if $P(\vec c) \notin \Delta$, $\neg P(\vec c) \in \Delta$, $\cons P(\vec c) \in \Delta$}\\[2mm]
                        \end{array}\right.\]

\end{enumerate}
\noindent for every $\vec c \in \C^n$.
By  $EXP^{\circ}$ and  $PEM^\con$, and by the fact that $\Delta$ is a closed non-trivial theory, it follows that the function $I(P)$ is well-defined. Observe that $I(P)$ induces a triple $\langle  P_+, P_-, P_\con \rangle$ such that: \\

$\langle  c_1, \dots, c_n \rangle \in P_+$ \ iff \ $P(c_1, \dots, c_n) \in \Delta$;

$\langle  c_1, \dots, c_n \rangle \in P_-$ \ iff \ $\neg P(c_1, \dots, c_n) \in \Delta$; and

$\langle  c_1, \dots, c_n \rangle \in P_\con$ \ iff \ $\con P(c_1, \dots, c_n) \in \Delta$.\\

\mh  Let $\rho_\Delta$ be the bivaluation defined as $\rho_\Delta(B)=1 $ iff $B\in\Delta$. 
 It remains to be proved that  $\rho_\Delta$ satisfies all clauses of Definition~\ref{def.bival.qf}.
 
\m  To prove this, note that when   $B$ is $P(c_1, \dots, c_n)$, $\neg P(c_1, \dots, c_n)$, or $\con P(c_1, \dots, c_n)$, 
 it follows from the definition of $\mfA_\Delta$  that $\rho_\Delta$ satisfies clauses 1$'$, 2$'$, and 3$'$  
 of Definition~\ref{def.bival.qf}. 
The remaining clauses follow from Lemma~\ref{lema.semantico}.

From Proposition~\ref{prop.v.induced.rho}, $\rho_\Delta$ induces a valuation $v_\Delta$ that satisfies all clauses of  Definition~\ref{def.6val.qf}, such that, for every sentence $B$: $v_\Delta(B) \in \textrm{D}$ if, and only if, $B \in \Delta$.
 \end{proof}
	\end{lemma}

	\begin{theorem} \label{th.completeness.qf}  {(Completeness of \qletfp\ w.r.t.~valuations)} 
	
	\mm\mh If
	$\Gamma \vDash_6 A$, then $\Gamma \vdash_{QF} A$.
    
	\begin{proof} Suppose that $\Gamma \nvdash A$. By Lemma~\ref{lema.lind}, there is a first-order language
	$\Lg^{+} = \langle \mathcal{C}^{+}, \mathcal{P} \rangle$ and a set $\Delta \subseteq
	Sent(\Lg^{+})$ such that $\mathcal{C} \subseteq \mathcal{C}^{+}$, $\Gamma \subseteq \Delta$, and
	$\Delta$ is a regular Henkin set 
	such that  $\Delta \nvdash A$. 
	By Lemma \ref{lema.canonical}, there
	exists a structure  $\mfA_\Delta$   
	such that $\mfA_\Delta  \vDash \Delta$,  but since $A\notin \Delta$, 
	$\mfA_\Delta \nvDash A$. 
		Let $\mfA_\Delta'$ be  the structure obtained by restricting $\mfA_\Delta$ to the language  
		$\Lg$. 
	 	Clearly, for every $B \in Sent(\Lg)$, $\mfA_\Delta' \vDash B$ if, and only if, $\mfA_\Delta \vDash B$.  
		As a result, $\mfA_\Delta' \vDash \Gamma$ (since $\Gamma \subseteq \Delta$) 
	but $\mfA_\Delta' \nvDash A$. Therefore, $\Gamma \nvDash A$.
	\end{proof}	\end{theorem}

Some remarks on the completeness proof are worth making here.
The proof relies on the equivalence between (six-valued) valuations and bivaluations of \qf. 
In Definition~\ref{def.structure.qf}  we defined \qf-structures  in terms of valuations by introducing an interpretation function 
$\mathcal{I}$ that 
determines a value in the set $\sf B$ to each atomic sentence $A$.
In Proposition~\ref{prop.alt.struc.qf}, we showed that each structure induces a triple $\langle P_+, P_-, P_\con \rangle$, namely the extension, the anti-extension, and the \con-extension of an $n$-ary predicate $P$. This triple determines both valuations and bivaluations with respect to $P$.
The canonical model $\mfA_\Delta$ (Lemma~\ref{lema.canonical}) is defined in terms of bivaluations 
($\rho_\Delta(B) = 1 \text{ iff } B \in \Delta$), which, given Lemma~\ref{lema.semantico}, makes it easier to show that the canonical model defines a bivaluation (Definition~\ref{def.bival.qf}).
The proof of Lemma~\ref{lema.canonical} shows that the canonical model is in fact a \qf-structure, as defined by Definition~\ref{def.structure.qf}, since the  bivaluation $\rho_\Delta$, by Proposition~\ref{prop.v.induced.rho} induces the valuation $v_\Delta$ that satisfies Definition~\ref{def.6val.qf}. 
Given soundness, the  result below follows immediately:

\begin{corollary}   \label{corol.comp.sound} \

\mm\mh $ \Gamma\vdash_{QF} A$ if and only if $ \Gamma\vDash_6 A$  if and only if  $ \Gamma\vDash_2 A$.
\begin{proof}
 The result follows from Propositions~\ref{prop.rho.induced.v}, \ref{prop.v.induced.rho}, and Theorems~\ref{th.soundness.qf} and 
 \ref{th.completeness.qf}.  
\end{proof}  

\end{corollary}


\section{On some properties  of \letfp\ and \qf}   \label{sec.prop.qf}

We start this section by presenting definitions and preliminary results to be used in the proof of the replacement property, 
as well as in the prenex normal form theorem for \qf.

\begin{definition} \ \label{def.compl.prop1}

 \m\mh The complexity $ \cc $ of a formula $A$ of \qf\  is defined as follows:



\vv
\begin{itemize}\setl  
\item For $ A $ atomic, $\cc(A) = 1$,
\item ${\cc}(\neg A) = {\cc}(A) + 1  $,  
\item ${\cc}(A\land B) = {\cc}(A) + {\cc}(B) + 1  $,  
\item ${\cc}(A\lor B) = {\cc}(A) + {\cc}(B) + 1  $,  
\item ${\cc}(\con A) = {\cc}(A) + 2  $,
\item ${\cc}(\forall A) = {\cc}(A) + 1  $,
\item ${\cc}(\exists A) = {\cc}(A) + 1  $. 
\end{itemize}

\end{definition}

\begin{proposition} \ \label{prop.equiv.repl.qf} 

\m\mh The following equivalences hold in \qf:

\begin{enumerate}[label=(\arabic*)]
 \setl 
%

\item ${\con\forall x B } \dv {\forall x (B\land\con B) \lor \exists x (\neg B\land\con B)}$
\item ${\con\exists x B } \dv {\exists x (B\land\con B) \lor \forall x (\neg B\land\con B)}$
%

 \item {$\neg \forall x  A \dashv\vdash \exists x \neg  A$}
\item {$\neg\exists x  A\dashv\vdash \forall x\neg   A$}
\item {$\neg \forall x \neg A\dashv\vdash\exists  x  A$}
\item {$  \neg \exists x \neg A \dashv\vdash \forall x A$}

\end{enumerate}

\begin{proof}
Items (1) and (2) follow immediately from Proposition \ref{prop.ND.con.qf}, 
while items (3) to (6) can be easily proved by means of the rules for quantifiers.
\end{proof}
\end{proposition}

\begin{proposition}  \label{prop.equiv.prenex} 

\m\mh The following equivalences hold in \qf:

\begin{enumerate}[label=(\arabic*)]
 \setl 
\item $\forall x (B\lor A) \dashv\vdash B\lor \forall x A $, $x$ is not free in $B$;
\item $\exists x (B\lor A) \dashv\vdash B\lor \exists x A $, $x$ is not free in $B$;
\item $\forall x (B\land A) \dashv\vdash B\land \forall x A $, $x$ is not free in $B$;
\item $\exists x (B\land A) \dashv\vdash B\land \exists x A $, $x$ is not free in $B$.    
\end{enumerate}

\begin{proof}
Item~(1), from left to right, is immediate  from $CD$. The remaining cases are easily proved by means of the quantifier rules of \qf.
\end{proof}
\end{proposition}

\subsection{Replacement \qf}

\begin{theorem} 
(Replacement property) \label{th.replacement.qf}

\m\mh 
Let $A$ and $B$ be formulas of \qf, and let $C(A)$ be a formula containing zero or more occurrences of $A$.
Denote by $C(B/A)$ the formula obtained from $C(A)$ by replacing one or more occurrences of $A$ with $B$.
Then,  $A \dashv\vdash B$ implies that $C(A) \dv  C(B/A)$.

\begin{proof}
The proof is by induction on the complexity $\cc$ of  $C$. We need to add the following cases to the proof of 
 Theorem~\ref{th.repl.sent}. 

\begin{enumerate}

\item $C = \forall x D$. 

\m(IH) $D(c/x)(A) \dv D(c/x)(B/A)$, where $c$ is a fresh constant. 

$(\forall x D)(A) \dv    (\forall x  D)(B/A)$
follows from (IH), $E\forall$, and $I\forall$. 

\mm\item $C =\exists xD$. 
Left to the reader. 

\mm\item $C = \neg  D $.    

 (i)   $D = \forall x E $. 

 (IH) $(\neg E)(A)(c/x) \dv (\neg E)(B/A)(c/x)$, where $c$ is a fresh constant.


$(\neg\forall x E)(A) \dv  (\exists x \neg E)(A)$, by Proposition~\ref{prop.equiv.repl.qf}

 $ (\exists x \neg E)(A) \dv  (\exists x \neg E)(B/A)$, by (IH), $E\exists$, and $I\exists$ 

$(\exists x \neg E)(B/A) \dv  (\neg \forall x E)(B/A)$, by Proposition~\ref{prop.equiv.repl.qf}

(ii)  $D = \exists x E $. Left to the reader. 
 
\item $C = \con D $ 

(i) $D =\forall x E$. 

\m 
(IH)  $ E (A)(c/x) \dv   E (B/A)(c/x) $,  
$\neg E (A)(c/x) \dv \neg  E (B/A)(c/x) $,  \\ 
$\con E (A)(c/x) \dv \con E (B/A)(c/x) $, where $c$ is a fresh constant.  

\m 

$\con \forall x E(A) \dv  {\forall x (E(A)\land\con E(A)) \lor \exists x (\neg E(A)\land\con E(A))}$, 
by  Prop.~\ref{prop.equiv.repl.qf}

${\forall x (E(A)\land\con E(A)) \lor \exists x (\neg E(A)\land\con E(A))} \dv  \\ 
 \mbox{\hspace{13mm}} {\forall x (E(B/A)\land\con E(B/A)) \lor \exists x (\neg E(B/A)\land\con E(B/A))}$, \\
  \mbox{\hspace{13mm}} by (IH) and a few  derivation steps. 

$ {\forall x (E(B/A)\land\con E(B/A)) \lor \exists x (\neg E(B/A)\land\con E(B/A))} \dv   
 \con  \forall x E(B/A)$, by  Proposition~\ref{prop.equiv.repl.qf}.  

\m
(ii) $D =\exists xE$. Left to the reader.

 \end{enumerate}
\end{proof}

\end{theorem}

\subsection{Prenex normal form}

 A formula $A^{v}$ is an \textit{alphabetic variant} of a formula $A$ if they differ only in the names of some (or all) of their bound variables. It is well known that, in general, such formulas are equivalent in logics without recovery operators like $\con$, such as classical and intuitionistic logic.
  The proof is straightforward, since under a substitutional reading, such as the one adopted here, the truth conditions of the quantifiers depend on the constants that replace the variables, but not on the names of the variables themselves.
A syntactic proof depends only on repeated applications of the elimination and introduction rules for the quantifiers.

In several first-order logics equipped with $\con$, however, such a proof cannot be carried out, since it is not possible to remove the 
quantifiers from the scope of $\con$, 
and an explicit rule is therefore introduced to enforce such an equivalence (see e.g. \cite{qletf, qmbc, rod.ant.lu}).
In \qf, by contrast, such formulas can be proved equivalent, and this result essentially depends on Proposition~\ref{prop.equiv.repl.qf}, which allows the quantifiers to be removed from the scope of $\con$.

\begin{proposition}\label{prop.alphab.var} \ 

\m\mh
Let $A$ be a sentence in the language of \qf, and let $A^v$ be an \textit{alphabetic variant} of $A$.  
Then $A \dashv\vdash A^v$.
\begin{proof}
    In order to show that $Qx_1 Ax_1 \vdash Qx_2 Ax_2$ (where $Q$ is either $\forall$ or $\exists$), just apply the corresponding elimination and introduction rules.
The result then follows by applying the replacement property.
    \end{proof}

\end{proposition}

\begin{theorem}(Prenex normal form theorem) \label{th.prenex}

\m\mh A formula  $A$ of \qf\ is in \textit{prenex normal form} (PNF) if   
\begin{itemize}
     
    \item[]   (i) $A$ has the form
$Q_1x_1Q_2x_2\dots Q_n x_n C$, where each $Q_i$,  $0 \leq i \leq n$, is either $\forall $ or 
$\exists $ and $C$ is a  quantifier-free formula, or 

\item[]  (ii) $A$ is a generalized literal (cf.~Definition~\ref{def.literals}), or

\item[]  (iii) $A$ is a top particle, or $A$ is a bottom particle. 

\end{itemize}

\m\mh 

 \m\mh 
 For every formula $A \in Sent(\Lg)$ 
 there is a formula $B\in Sent(\Lg)$ in prenex normal form such that $A \dashv\vdash B$.



\begin{proof} The proof is by induction on the complexity $\cc$ of  $A$.

\m\mh If $\cc(A)=1$, then $A$ is an atom, and hence  it is in PNF.

\m\mh    
 If $\cc(A) > 1$ and $A$ is a generalized literal, a top particle, or a bottom particle, then $A$ is in PNF.
Otherwise, we proceed according to the following cases.

\begin{enumerate}[label=(\arabic*)]


\m
\item $A = B\lor C$. 
 
By (IH), there are formulas $B'$ and $C'$ in PNF
such that $B \dashv\vdash B'$ and $C\dashv\vdash C'$. 
Let $A'$ be the formula $B' \lor C'$.
Apply Proposition~\ref{prop.alphab.var} to rename the variables of $B'$ and $C'$ so as to obtain equivalent formulas
$B'' = Qx_1 Qx_2 \dots Qx_n D$ and $C'' = Qx_{n+1} Qx_{n+2} \dots Qx_m E$,
such that no variable of $B''$ occurs in $C''$, and vice versa.
Then apply items~(1) and~(2) of Proposition~\ref{prop.equiv.prenex} and replacement to obtain a formula in PNF equivalent to $A$ in \qletfp.

 \m\item  $A = B\land C$. Left to the reader.

\m\item $A=\forall x B$.


Consider the formula $B(c/x)$, obtained by applying $E\forall$ to $A$.  
By (IH),  there is a formula $B'(c/x)$ in PNF such that $B'(c/x) \dashv\vdash B(c/x)$.
 Now, applying $I\forall$ to $B'(c/x)$ gives $\forall x B'$. This formula is in PNF 
(recall that there are no void quantifiers in the language of \qf, see the beginning of Subsection~\ref{sectQLETF+}) and is equivalent to $A$ in \qletfp. 

 \m\item   $A=\exists x B$. Left to the reader.

\m\item $A= \neg  B$

\m  (i)  $B = \neg C$, so  $A = \neg\neg C$. 
 By (IH), there is a formula $C'$ in PNF such that $C' \dashv\vdash C$. Apply double negation and (IH).

\m (ii)  $B = C\land D$   

By (IH), there are formulas $C'$ and $D'$ in PNF such that $C' \dashv\vdash C$ and $D' \dashv\vdash D$.
By De Morgan and (IH), we obtain $\neg C' \lor \neg D'$.
Apply Proposition~\ref{prop.equiv.repl.qf} to remove quantifiers occurring within the scope of negations.
Then proceed as in item~(2) above, applying Proposition~\ref{prop.equiv.prenex}.


\m (iii)   $B = C\lor D$. Left to the reader.

\m (iv) $B =  \con C$.  

$A$ = $\neg\con C$.  
 By (IH), there is a formula $C'$ in PNF such that $C' \dv \con C$.
 Apply (IH) to obtain a formula $\neg C'$. 
 Then apply Proposition~\ref{prop.equiv.repl.qf} to remove quantifiers occurring within the scope of negation. By replacement, the resulting formula is equivalent to $A$ in \qletfp.

 \m (v) $B = \forall x C$. 

$A= \neg \forall x C$. By (IH), there is a formula $C'$ in PNF such that $C \dv\ C'$. 
Apply Proposition~\ref{prop.equiv.repl.qf}.

 \m (vi)   $B = \exists x C$. Left to the reader.

\item $A = \con B$

\m (i) $B = \neg C$. 

In this case, $A=\con \neg C$. 
(IH) there is a formula $D$ in PNF such that $D \dv  \neg C$. By replacement we obtain 
$\con D$, where $D$ is in PNF. $\con D$ has the form $\con Q E$, where  $Q$ is a quantifier and  $E$ is in PNF. 
By Proposition~\ref{prop.equiv.repl.qf}, from $\con Q E$ we obtain the formula 
$Q_1(E\land\con E) \lor Q_2(\neg E\land\con E)$, By (IH), there are formulas $E'$ and $E''$ in PNF such that 
$E'\dv \neg E$ and $E'' \dv \con E$. Apply (IH) to obtain the formula 
$Q_1(E\land E'') \lor Q_2( E'\land  E'')$. Then apply Proposition~\ref{prop.equiv.prenex} and replacement to obtain a formula in PNF equivalent to $A$ in \qletfp.


\m (ii)  $B =  C\land D$

$A= \con (C\land D)$. 
By Proposition~\ref{prop.equiv.con}, we obtain
$({\con C} \land {\con D} \land C \land D) \lor (\con C \land \neg C) \lor (\con D \land \neg D)$.
By (IH), there are formulas $C'$, $C''$, $C'''$, $D'$, $D''$, and $D'''$ in PNF such that
$C' \dashv\vdash C$, $C'' \dashv\vdash \neg C$, $C''' \dashv\vdash \con C$,
 $D' \dashv\vdash D$, $D'' \dashv\vdash \neg D$, and $D''' \dashv\vdash \con D$.
By replacement, we then obtain the formula
$({C'''} \land {D'''} \land C' \land D') \lor (C''' \land C'') \lor (D''' \land D'')$.
Finally, apply items~(1) and~(2) above to obtain a formula in PNF equivalent to $A$ in \qletfp, by replacement.

\m(iii)   $C\lor D$. Left to the reader.

\m(iv) $B=\con C$. In this case, $A=\con\con C$, which  is a top particle, so $A$ is in PNF. 

\m (v) $B=\forall x C$

In this case, $A = \con \forall x C$. Apply Proposition~\ref{prop.equiv.repl.qf} to obtain the formula
$\forall x (\con C \land C) \lor \exists x (\con C \land \neg C)$.
By (IH), there are formulas $C'$, $C''$, and $C'''$ in PNF such that
$C' \dashv\vdash C$, $C'' \dashv\vdash \neg C$, and $C''' \dashv\vdash \con C$.
By replacement, we then obtain the formula
$\forall x (C''' \land C') \lor \exists x (C''' \land C'')$.
Finally, proceed as in items~(1)-(4) above.

\m (vi)    $B=\exists x C$. 
Left to the reader
\end{enumerate}
\end{proof}

\end{theorem}

\section{Final remarks}   \label{sec.final.rem}

This paper introduced the logic of evidence and truth \qf. Like every \textit{LET}, \qf\ is an extension of \fde\ and is able to express six scenarios: the four scenarios of \fde\ plus two additional scenarios of reliable information.
\qf\ is proposed here as an information-based logic, capable of representing positive information and negative information -- which is the usual approach in information-based logics -- and also reliable information, expressed with the help of the unary operator \con. 
\qf\ admits a six-valued semantics, obtained from twist structures based on the bivalued semantics, and the six values can be understood as names of the six scenarios of the \lets.

\qf\ adopts the concept of an extended literal, which are formulas   $A$, $\neg A$, and $\con A$, for atomic $A$.
These formulas express the notions of, respectively, positive, negative, and reliable information, which are taken as primitive.
While $A$ and $\neg A$ have completely independent deductive behavior,  there are constraints concerning $\con A$: in the bivalued semantics it always receives value $0$ when the values of $A$ and $\neg A$ coincide, since in such cases there is clearly no reliable information about $A$.
In both the bivalued semantics and the six-valued semantics, once semantic values are assigned to extended literals, the semantic values of all formulas of the language are obtained. That is, such semantics are deterministic.

\qf\ assumes that the information that $A$ is reliable is itself reliable, hence the validity of $\con\con A$, and it is equipped with propagation rules, which are rules that transmit the operator \con\ from less complex formulas to more complex ones, and conversely.
These rules work as introduction and elimination rules for \con\ for complex formulas.
A non-classicality operator \incon\ is defined as $\incon A \defi \neg\con A$, and it has a deductive behavior dual to the operator \con, just as $\land$ and $\lor$ are dual to each other.

\qf\ starts from notions that are, so to speak, well-behaved and symmetric, which allowed it to be constructed in such a way that its logical operators are also  well-behaved and symmetric, both syntactically and semantically.
This made it possible for \qf\ to enjoy several desirable metatheoretical results, as shown in Sections~\ref{sec.properties.letfp} and~\ref{sec.prop.qf}, namely the conjunctive and disjunctive normal forms, the replacement property, and the prenex normal form theorem.

Some extensions of \qf\ naturally suggest themselves as worthwhile topics for further investigation.
The logic \letj, introduced in \cite{letj}, is equipped with a constructive implication, and its \con-free fragment is Nelson’s logic \nel. Quantified versions of \letj\ with constant and variable domains, dubbed \qcletj\ and \qvletj, have been studied in \cite{rod.ant.lu}. Extensions of these systems with propagation rules are likely to admit six-valued Kripke semantics. 
The logic \letkp, introduced in \cite{con.rod.sl}, whose implication-free fragment is the sentential logic \letfp, is an extension of \fdeto, obtained by adding   a material implication to \fde. We expect that \letkp\ can be extended to the first-order level with constant domains in a straightforward way, simply by adding a material implication to \qletfp.
These logics are certainly worth studying, but it is likely that the results obtained here with respect to \qletfp\ will not carry over to them.

 \bibliographystyle{plainnat}       
\bibliography{refs.main}

@book{troelstra.vandalen.1988,
	Author		= {A. S. Troelstra and D. {van Dalen}},
	Title		= {Constructivism in Mathematics}, 
	Volume		= {I},
	Year		= {1988}, 
	Publisher	= {North Holland},}

@article{gurevich.1977,
  author    = {Y. Gurevich},
  title     = {Intuitionistic Logic with Strong Negation},
  journal   = {Studia Logica},
  volume    = {36},
  pages     = {49--59},
  year      = {1977},
  doi       = {10.1007/BF02121114}
}

@article{qletf,
      title={Valuation semantics for first-order logics of evidence and truth}, 
      author={H. Antunes and A. Rodrigues and W. Carnielli and M. Coniglio},
      year={2022},
        Journal = {Journal of Philosophical Logic},
       volume = {51},
       number = {5},
       pages = {1141--1173},
        doi={10.1007/s10992-022-09662-8}, 
}

@Article{letf,
  author  = {Rodrigues, A. and Bueno-Soler, J. and Carnielli, W.},
  title   = {Measuring evidence: a probabilistic approach to an extension of {Belnap-Dunn logic}},
  journal = {Synthese},
  year    = {2020},
volume={198}, 
pages={5451-5480},
}

@article{carn:marcos:deamo:2000,
  title={Formal inconsistency and evolutionary databases},
  author={W. Carnielli and J. Marcos and S.  de Amo},
  journal={Logic and Logical Philosophy},
  volume={8},
  number={8},
  pages={115--152},
  year={2000}, 
}

@Article{fetzer.2004.dis,
  author  = {J. Fetzer},
  title   = {Disinformation: The Use of False Information},
  journal = {Minds and Machines},
  year    = {2004},
  volume  = {14},
  pages   = {231-240},
}

@Article{fallis,
  author  = {Fallis, D.},
  title   = {What is disinformation?},
  journal = {Library Trends},
  year    = {2015},
  volume  = {63},
  pages   = {401-426},
}

@Article{rod.ant.lu,
  author  = {A. Rodrigues and H. Antunes},
  journal = {Logica Universalis},
  title   = {First-order Logics of Evidence and Truth with Constant and Variable Domains},
  year    = {2022},
  doi     = {10.1007/s11787-022-00306-8},
}

@article{ShramkoWansing2019NatureEntailment,
  author  = {Shramko, Y. and Wansing, H.},
  title   = {The nature of entailment: an informational approach},
  journal = {Synthese},
  volume  = {198},
  number  = {Suppl. 22},
  pages   = {5241--5261},
  year    = {2021},
  doi     = {10.1007/s11229-019-02474-5}
}

@book{wardle2017,
  author      = {C. Wardle and H. Derakhshan},
  title       = {Information Disorder: Toward an Interdisciplinary Framework for Research and Policy},
  publisher = {Council of Europe},
  year        = {2017},
  url         = {https://rm.coe.int/information-disorder-report-2017/168076277c}
}

@Article{qmbc,
  author  = {W. Carnielli and M.E. Coniglio and R. Podiacki and T. Rodrigues},
  title   = {On the way to a wider model theory: completeness theorems for first-order logics of formal inconsistency},
  journal = {The Review Of Symbolic Logic},
  year    = {2014},
  volume  = {7},
  number  = {3},
  pages   = {548--578},
}

@InCollection{belnap.1977.how,
  author    = {Belnap, N. D.},
  title     = {How a computer should think},
  booktitle = {Contemporary {A}spects of {P}hilosophy},
  publisher = {Oriel Press},
  year      = {1977},
  editor    = {G. Ryle},
  note={Reprinted in  \textit{New Essays on Belnap-Dunn Logic}, Springer, 2019, pages 35-55},
}

@Book{wans.93,
  title     = {The Logic of Information Structures},
  publisher = {Springer},
  year      = {1993},
  author    = {H. Wansing},
}

@Article{dunn76,
  author  = {Dunn, J. M.},
  title   = {Intuitive semantics for first-degree entailments and `coupled trees'},
  journal = {Philosophical Studies},
  year    = {1976},
  volume  = {29},
  pages   = {149--168},
}

@Article{recovery,
  author  = {Carnielli, W. and Coniglio, M.E. and Rodrigues, A.},
  title   = {Recovery operators, paraconsistency and duality},
  journal = {Logic Journal of the IGPL},
  year    = {2019},
  volume  = {28},
  pages   = {624-656},
}

@Article{letj,
  author    = {Carnielli, W. and Rodrigues, A.},
  title     = {An epistemic approach to paraconsistency: a logic of evidence and truth},
  journal   = {Synthese},
  year      = {2017},
  volume    = {196},
  pages     = {3789-3813},
  doi       = {10.1007/s11229-017-1621-7},
  url       = {https://rdcu.be/ctJRQ},
}

@article{con.rod.sl,
  title   = {From {Belnap--Dunn} Four-Valued Logic to Six-Valued Logics of Evidence and Truth},
  author  = {M. E. Coniglio and A. Rodrigues},
  journal = {Studia Logica},
  year    = {2024},
  volume  = {112},
  number  = {3},
  pages   = {561--606},
  note    = {First online: 2023-08-09. Preprint: \url{https://bit.ly/47MZ61Z}}
}

@InCollection{wans.odin,
  author    = {Odintsov, S. and Wansing, H.},
  title     = {On the Methodology of Paraconsistent Logic},
  booktitle = {Logical Studies of Paraconsistent Reasoning in Science and Mathematics},
  publisher = {Springer},
  year      = {2016},
  editor    = {H. Andreas and P. Verd\'ee},
}

@article{can.M.fig.2020,
  title={On the logic that preserves degrees of truth associated to involutive {S}tone algebras},
  author={Cant\'u, M. and Figallo, M.},
  journal={Logic Journal of IGPL},
  year={2020},
  volume={28},
number = {5},
  pages={1000--1020}, 
}

@article{mar.riv.2022,
  title={Logics of involutive {S}tone algebras},
  author={Marcelino, S. and Rivieccio, U.},
  journal={Soft Computing},
  year={2022},
  volume={26},
number = {7},
  pages={3147--3160}, 
}

\end{document}